\documentclass [12pt,twoside,a4paper]{article}
\usepackage{amsmath}
\usepackage{amsfonts}
\usepackage{amsthm}
\usepackage{amstext}
\usepackage{amssymb}
\usepackage{mathrsfs}
\usepackage{amscd}
\usepackage{xypic}
\usepackage{cite}
\usepackage[numbers,sort&compress]{natbib}
\usepackage{epsf}
\usepackage{fancybox}
\usepackage{color}
\usepackage{fancyhdr}
\usepackage[hang,footnotesize]{caption2}
\usepackage{enumerate}
\usepackage{float}
\usepackage{makecell}
\usepackage{subfigure}
\usepackage{graphicx}
\usepackage{epstopdf}
\usepackage{epsfig}
\usepackage[numbers]{natbib}
\usepackage{ragged2e}
\usepackage{graphicx,epsfig,epstopdf,subfigure}
\numberwithin{equation}{section}
\newfont{\aaa}{cmb10 at 19pt}
\newfont{\bbb}{cmb10 at 14pt}
\newtheorem{theorem}{Theorem}[section]
\newtheorem{corollary}[theorem]{Corollary}
\newtheorem{conjecture}[theorem]{Conjecture}
\newtheorem{lemma}[theorem]{Lemma}

\newtheorem{definition}[theorem]{Definition}
\newtheorem{proposition}[theorem]{Proposition}

\makeatletter

\newcommand{\Rmnum}[1]{\expandafter\@slowromancap\romannumeral #1@}
\makeatother

\newcommand{\bal}{\begin{align}}
\newcommand{\eal}{\end{align}}
\newcommand{\beq}{\begin{equation}}
\newcommand{\eeq}{\end{equation}}
\newcommand{\bey}{\begin{eqnarray}}
\newcommand{\eey}{\end{eqnarray}}
\newcommand{\beyy}{\begin{eqnarray*}}
\newcommand{\eeyy}{\end{eqnarray*}}

\bfseries\normalfont

\begin{document}

\title{The $H$-linkage problems in sparse robustly expanding digraphs \thanks{The authors are supported by NNSF of China (No.12071260).}
}

\author{Zhilan Wang, Jin Yan\thanks{Corresponding author. E-mail address: yanj@sdu.edu.cn.}
\unskip\\[2mm]
School of Mathematics, Shandong University, Jinan 250100, China}
\date{}
\maketitle
\begin{abstract}
\noindent
The Nash-Williams conjecture establishes degree sequence conditions ensuring Hamilton cycles in digraphs. An asymptotic version of this conjecture for large digraphs was independently derived by several researchers. We strengthen these results by proving the following results under the same asymptotic degree sequence conditions. For any digraph $H$, a digraph $D$ is $(\mathcal{N}H)$-linked if there exists an integer $l_0$ such that for any vertex set $U$ of cardinality $|V(H)|$ and every integer set $\mathcal{N}=\{l_i\}_{i=1}^{|A(H)|}$ with $l_i\geq l_0$, $D$ contains an $H$-subdivision with $U$ as branch-vertex set and the values in $\mathcal{N}$ specifying the lengths of the subdivided paths. Let $D$ be a sufficiently large digraph of order $n$ with the out-degree sequence $d_1^+\leq\cdots\leq d_n^+$ and the in-degree sequence $d_1^-\leq\cdots\leq d_n^-$. We prove that if for every $\gamma\in(0, 1)$ and every integer $0\leq i<n/2$, the following conditions hold: (i) $d_i^+\geq i+\gamma n$ or $d_{n-i-\gamma n}^-\geq n-i$, and (ii) $d_i^-\geq i+\gamma n$ or $d_{n-i-\gamma n}^+\geq n-i$, then $D$ is $(\mathcal{N}H)$-linked, and also admits a perfect $H$-subdivision tiling with subdivision orders $\{n_1, \ldots, n_k\}$, where each $n_i\geq C_0$ for some integer $C_0$.

As a consequence, this result implies the asymptotic version of the Nash-Williams conjecture for large digraphs, while unifying and extending prior results on Hamilton cycles [K\"{u}hn-Osthus-Treglown, JCTB (2010); K\"{u}hn-Osthus, Eur. J. Combin. (2012); Lo-Patel, Electron. J. Combin. (2018)]. 
\end{abstract}
\noindent{\bf Keywords:} Digraph; $H$-linked; minimum in- and out-degree; absorption method; Robust expansion

\noindent{\bf Mathematics Subject Classifications:}\quad 05C07, 05C20, 05C35, 05C38
\section{Introduction}
The complete characterization of Hamilton graphs remains elusive, making the search for sufficient conditions ensuring Hamiltonicity a natural and important research direction. A landmark result in this field is Chv\'{a}tal's theorem \cite{Chvatal}, which establishes a definitive degree sequence criterion for the existence of a Hamilton cycle: Let $G$ be a graph of order $n\geq3$ with degrees $d_1\leq\cdots\leq d_n$. If for every integer $i<n/2$, $d_i\geq i+1$ or $d_{n-i}\geq n-i$, then $G$ is Hamilton. Remarkably, this condition is tight.

The natural extension to digraphs presents significant additional complexity. Throughout the paper, we consider digraphs that allow at most one arc in each direction between any pair of vertices. For a digraph of order $n$, it is natural to consider its out-degree sequence $d_1^+, \ldots, d_n^+$ and in-degree sequence $d_1^-, \ldots, d_n^-$, and we take the convention that $d_1^+\leq\cdots\leq d_n^+$ and $d_1^-\leq\cdots\leq d_n^-$ without mentioning this explicitly. Note that the terms $d_i^+$ and $d_i^-$ do not necessarily correspond to the degree of the same vertex of this digraph. Recognizing this challenge, in $1975$ Nash-Williams conjectured a directed analogue of Chv\'{a}tal's theorem:
\begin{conjecture}\cite{Nash}\label{Na1}
Suppose that $D$ is a strongly connected $n$-vertex digraph satisfying for all $i<n/2$,

$(i)$ $d_i^+\geq i+1$ or $d_{n-i}^-\geq n-i$,

$(ii)$ $d_i^-\geq i+1$ or $d_{n-i}^+\geq n-i$.

\noindent Then $D$ contains a Hamilton cycle.
\end{conjecture}
This conjecture inspired decades of research. In $2010$, K\"{u}hn, Osthus and Treglown established an asymptotic version of this conjecture for sufficiently large digraphs.
\begin{theorem}\cite{W5}\label{q1}
For every $\gamma>0$ there exists an integer $n_0=n_0(\gamma)$ such that the following holds. Suppose $D$ is a digraph on $n\geq n_0$ vertices such that for all $i<n/2$,

$(i)$ $d_i^+\geq i+\gamma n$ or $d_{n-i-\gamma n}^-\geq n-i$,

$(ii)$ $d_i^-\geq i+\gamma n$ or $d_{n-i-\gamma n}^+\geq n-i$.

\noindent Then $D$ contains a Hamilton cycle.
\end{theorem}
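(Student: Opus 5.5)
We would follow the strategy of K\"{u}hn, Osthus and Treglown, which rests on the notion of robust outexpansion. Fix constants $1/n_0\ll \nu\ll\tau\ll\gamma$. Recall that $D$ is a \emph{robust $(\nu,\tau)$-outexpander} if every $S\subseteq V(D)$ with $\tau n\le |S|\le (1-\tau)n$ satisfies $|RN^+_\nu(S)|\ge |S|+\nu n$. Here $RN^+_\nu(S)$ is the set of vertices having at least $\nu n$ in-neighbours in $S$. The plan has two parts. First, we show that the degree sequence conditions (i) and (ii) either force $D$ to be a robust outexpander, or give enough structure to handle $D$ directly. Second, we invoke the known result that every robust outexpander with linear minimum semidegree contains a Hamilton cycle. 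That result is proved via the diregularity lemma and a blow-up/absorbing argument, and we treat it as available.

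\textbf{Step 1 (degree bookkeeping).} From (i) and (ii) we extract a weak minimum semidegree bound, namely $\delta^0(D)\ge \gamma n$ up to a small exceptional set. Taking $i=0$ already gives $d_1^\pm\ge\gamma n$ or a very large in/out-degree for almost all vertices; iterating over small $i$ gives the linear bound. We also record the consequence that for each $i<n/2$, at most $i$ vertices have out-degree below $i+\gamma n$, unless almost all vertices have in-degree at least $n-i$.

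\textbf{Step 2 (expansion).} Take $S$ with $\tau n\le|S|\le(1-\tau)n$, and suppose $|RN^+_\nu(S)|<|S|+\nu n$. Counting arcs from $S$ into $V\setminus RN^+_\nu(S)$ gives at most $\nu n^2$ such arcs. So most vertices of $S$ have out-degree at most about $|S|+2\nu n$. Apply (i) with $i\approx|S|$ when $|S|<n/2$. The alternative ``$d^+_i\ge i+\gamma n$'' fails, so many vertices have in-degree at least $n-i$. Such vertices receive arcs from almost all of $S$, hence lie in $RN^+_\nu(S)$. This forces $|RN^+_\nu(S)|$ to be large, a contradiction. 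The case $|S|\ge n/2$ is handled symmetrically by applying (ii) to the complement $V\setminus RN^+_\nu(S)$, using in-neighbourhoods. The main obstacle is exactly here. The conditions are ``or'' conditions indexed by positions in sorted sequences, not by specific vertices, and the shift $n-i-\gamma n$ must absorb the $\nu n$ errors. The difficulty is in choosing $i$ so that the two alternatives genuinely contradict each other. Where expansion fails only on a few extremal sets (a near-bipartite or near-disconnected configuration), we would instead show that the conditions exclude those configurations.

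\textbf{Step 3 (conclusion).} With $D$ a robust $(\nu,\tau)$-outexpander, we may still have a small set of low-degree vertices. We first cover these by short disjoint paths, using the high-degree alternative vertices supplied by the conditions. We then contract these paths into single vertices; the resulting digraph remains a robust outexpander with $\delta^0\ge\gamma n/2$. Applying the Hamilton cycle theorem for robust outexpanders then yields a Hamilton cycle. Expanding the contracted paths back gives a Hamilton cycle in $D$.
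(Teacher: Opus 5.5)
\textbf{Overall route.} Your architecture matches the one the paper relies on. The paper does not reprove this theorem; it cites it from K\"{u}hn, Osthus and Treglown, but it records the two ingredients: Lemma~\ref{lm1} (the asymptotic Nash-Williams condition gives $\delta^0(D)\ge\gamma n$ and robust $(\tau^2,\tau)$-outexpansion) and Lemma~\ref{covering-lemma} with $l=n$. Together these give the theorem at once.

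\textbf{Step 1 undersells the conditions.} The index $i=0$ is not admissible. Taking $i=1$ in (i) gives either $d_1^+\ge 1+\gamma n$, or at least $\gamma n+2$ vertices of in-degree $n-1$, each of which receives an arc from every other vertex. Either way $\delta^+(D)\ge\gamma n$, and (ii) gives $\delta^-(D)\ge\gamma n$ in the same way. Also, the second alternative in (i) supplies only $i+\gamma n+1$ vertices of in-degree at least $n-i$, not ``almost all'' vertices. So there is no exceptional set, and the path-covering and contraction in Step~3 should simply be dropped.

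\textbf{The genuine gap is in Step~2,} which you yourself leave unresolved. With $i\approx|S|$, the claim that vertices of in-degree at least $n-i$ ``receive arcs from almost all of $S$'' is false. Such a vertex may have up to $i-1\approx|S|$ non-in-neighbours, so it may miss almost all of $S$. Moreover, put $T=V\setminus RN^+_\nu(S)$. Averaging $e(S,T)<\nu n^2$ only shows that all but $\sqrt{\nu}\,n$ vertices of $S$ have out-degree $<|S|+\nu n+\sqrt{\nu}\,n$; the error term is $\sqrt{\nu}$, not $\nu$.

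\textbf{Fix for $|S|<n/2$.} The missing idea is to offset $i$ from $|S|$ by exactly this error. Take $\nu=\tau^2$ and $i=|S|-\tau n$. Then $d_i^+<i+3\tau n<i+\gamma n$, so (i) gives $i+\gamma n+1$ vertices of in-degree at least $n-i$. Each of them has at least $|S|-i=\tau n\ge\nu n$ in-neighbours in $S$. Hence $|RN^+_\nu(S)|>|S|+(\gamma-\tau)n$, contradicting $|RN^+_\nu(S)|<|S|+\nu n$.

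\textbf{Fix for $|S|\ge n/2$.} This case is not a mirror image; it needs an arc count. The more than $n-|S|-\nu n$ vertices of $T$ all have in-degree $<n-|S|+\nu n$. So (ii) with $i=n-|S|-\tau n$ gives $i+\gamma n+1$ vertices of out-degree at least $|S|+\tau n$. At least $(\gamma-\tau)n$ of these lie in $S$, and each sends at least $|T|-i>(\tau-\nu)n$ arcs into $T$. This contradicts $e(S,T)<\nu n^2$ once $\tau\ll\gamma$.

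These two computations handle every $S$ with $\tau n<|S|<(1-\tau)n$. No separate treatment of near-bipartite or near-disconnected configurations is needed.
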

To avoid redundancy, we refer to conditions $(i)$-$(ii)$ in Conjecture \ref{Na1} as the \emph{Nash-Williams condition}, and $(i)$-$(ii)$ in Theorem \ref{q1} as the \emph{asymptotic Nash-Williams condition}. Subsequently, Christofides et al. \cite{Christofides} improved Theorem \ref{q1} with the degrees in the first parts of these conditions being not `capped' at $n/2$. A parallel algorithmic formulation of the theorem was later developed in \cite{Christofides1}. Following this, K\"{u}hn and Osthus \cite{K2}, Lo and Patel \cite{K3} independently provided proofs of Theorem \ref{q1} that do not rely on the Regularity Lemma.

\smallskip

The study of Hamilton cycles has inspired researchers to explore the embedding of arbitrary complex topological substructures in digraphs. Subdivisions, which preserve a digraph's topological structure by replacing arcs with internally vertex-disjoint paths, have become a powerful tool for studying graph containment problems. Formally, given any digraphs $D$ and $H$, let $\mathcal{P}(D)$ be the family of paths in $D$. An \emph{$H$-subdivision} in $D$ is a pair of mappings $f: V(H)\rightarrow V(D)$ and $g: A(H)\rightarrow \mathcal{P}(D)$ such that $(a)$ $f(u)\neq f(v)$ for $u, v\in V(H)$ with $u\neq v$, and $(b)$ for every arc $uv\in A(H)$, $g(uv)$ is a path from $f(u)$ to $f(v)$, and different arcs are mapped into internally vertex-disjoint paths in $D$. A digraph $D$ is \emph{$H$-linked} if every injective mapping $f: V(H)\rightarrow V(D)$ can be extended to an $H$-subdivision in $D$. Furthermore, we say that $D$ is \emph{$(\mathcal{N}H)$-linked} if there exists a constant $C_0$ such that for any integer set $\mathcal{N}=\{l_1, \ldots, l_h\}$ with $l_i\geq C_0$ for each $i\in\{1,\ldots, h\}$, every mapping $f$ can be extended to an $H$-subdivision, in which the subdivided paths have length $l_1, \ldots, l_h$, respectively.

\smallskip

Researchers have been particularly intrigued by the degree conditions that ensure a digraph is $H$-linked. In particular, for $H=k\overrightarrow{K}_2$ (where $\overrightarrow{K}_2$ denotes an arc), several results under minimum semi-degree conditions are established in \cite{Jacobson,20081,20082}. The following result provides an approximate version of Nash-Williams' conjecture for sufficiently large digraphs, while simultaneously generalizing the conclusion of Theorem \ref{q1}.
\begin{theorem}\label{szc1}
Let $H$ be a digraph with $h$ arcs and no isolated vertices. There exists a positive integer $C_0$ such that for any set of integers $\mathcal{N}=\{l_1, \ldots, l_h\}$ with $l_i\geq C_0$ for all $i$ and every $\gamma\in(0, 1)$, there exists $n_0=n_0(C_0, \gamma)>0$ such that the following holds: For all $n\geq n_0$, any $n$-vertex digraph $D$ satisfying the asymptotic Nash-Williams condition is $(\mathcal{N}H)$-linked.
\end{theorem}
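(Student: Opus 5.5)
The plan is to combine the robust expansion framework with the absorbing method. First, I will invoke the structural consequence of the asymptotic Nash-Williams condition used by K\"{u}hn--Osthus--Treglown and by Lo--Patel. For suitable $0<\nu\ll\tau\ll\gamma$, a digraph $D$ satisfying the condition is either a robust $(\nu,\tau)$-outexpander with linear minimum semi-degree, or it is close to one of a small number of extremal configurations. Concretely, after deleting a small exceptional set $V_0$ of vertices of unusually small degree, the rest satisfies $\delta^0\ge \gamma n/2$ and $|N^+_{\nu}(S)|\ge |S|+\nu n$ for all $\tau n\le|S|\le(1-\tau)n$. The low-degree vertices form a set $V_0$ of size at most about $\gamma n$, and each of them still has degree large relative to the vertices it must be linked through.

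I would then build the $H$-subdivision in four steps.

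\textbf{Step 1: absorbing structure.} Following the robust expander linkage technique, I will find a short absorbing path system $\mathcal{A}$ in $D$ avoiding the branch set $U=f(V(H))$. Every small enough leftover set $W$ can be inserted into $\mathcal{A}$, with the path lengths adjusted by exact amounts. The parity and length flexibility comes from the fact that robust expanders contain cycles of all lengths, so $C_0$ will be chosen larger than the absorber gadget lengths.

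\textbf{Step 2: cover $V_0$.} I will cover $V_0$ by short disjoint paths. Condition (i)/(ii) guarantees a matching-type structure: each $x\in V_0$ has in- and out-neighbours among high-degree vertices, so $V_0$ can be threaded into paths of bounded length whose endpoints lie in the expanding part. This is exactly where the degree sequence, rather than the minimum degree, is used, and I expect it to be the \emph{main obstacle}. The difficulty is that the prescribed lengths $l_i$ must still be achievable after these detours. I would first try a Hall-type argument on the auxiliary bipartite graph between $V_0$ and high-degree vertices, using condition (i) with $i=|V_0|$, to attach each exceptional vertex to distinct neighbours.

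\textbf{Step 3: connect the branch vertices.} For each arc $u_jv_j$ of $H$, I will connect $f(u_j)$ to $f(v_j)$ by short paths in the robust expander. These paths go through the covered $V_0$-paths and through segments of $\mathcal{A}$, and they are pairwise internally disjoint since $h$ is constant and robust expansion survives deleting $o(n)$ vertices. Each path will initially be strictly shorter than $l_j-C$ for a constant $C$.

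\textbf{Step 4: length adjustment.} I will extend each path to length exactly $l_j$. For short targets I use cycles of all lengths and absorption. For linear-length targets I use the robust expander's Hamilton-path connectivity between prescribed endpoints, partitioning the remaining vertices appropriately, with spanning paths of prescribed length inside random subsets, which themselves inherit robust expansion. Any leftover vertices are absorbed by $\mathcal{A}$ only when a perfect cover is required. For the linkage statement itself, only exact lengths are needed, so absorption is used solely to tune lengths and the final structure need not be spanning.

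Choosing constants hierarchically as $1/n_0\ll\nu\ll\tau\ll\gamma$ and $C_0$ depending on $h$ then completes the argument.
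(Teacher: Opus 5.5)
There is a genuine gap. Your structural starting point is misidentified, and the step you single out as the main obstacle rests on it. The asymptotic Nash--Williams condition has no extremal alternative and yields no exceptional set of low-degree vertices. By Lemma~\ref{lm1}, the whole digraph $D$ has $\delta^0(D)\ge\gamma n$ and is a robust $(\tau^2,\tau)$-outexpander; by Proposition~\ref{prop2} it is then also a robust inexpander. The semi-degree bound is even immediate. If $d_1^+<1+\gamma n$, condition (i) with $i=1$ gives $d^-_{n-1-\gamma n}\ge n-1$. So more than $\gamma n$ vertices receive an arc from every other vertex, and every out-degree exceeds $\gamma n$ anyway; (ii) does the same for in-degrees. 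Hence your Step~2 is vacuous. Taken on its own terms, however, your proposal does not close: the ``extremal configurations'' you posit are never treated, and the Hall-type covering of $V_0$ is only something you ``would try''. In the paper, Theorem~\ref{szc1} is just Lemma~\ref{lm1} followed by Theorem~\ref{song1}, whose proof uses nothing beyond robust expansion and $\delta^0(D)\ge\gamma n$.

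The real work is what you compress into Step~1, and there you only state the desired conclusion (``every small enough leftover set can be inserted, with the path lengths adjusted by exact amounts''). Steps~3--4 (``cycles of all lengths'', ``Hamilton-path connectivity inside random subsets'') give no mechanism for lengthening one prescribed subdivided path by an exact number of vertices while keeping the branch vertices and the other paths intact. The paper builds such a mechanism explicitly:
\begin{itemize}
\item a disjoint family $K$ of pairs that $d$-covers $V(D)$ (Lemma~\ref{lemma3});
\item disjoint ladders with $O(\nu^{-2})$ vertices between these pairs (Lemma~\ref{lemma4});
\item short connecting paths (Lemma~\ref{lamma1}, applicable after Proposition~\ref{prop1}) threading all rung paths and the branch vertices into an $H$-linked subdigraph in which every ladder is embedded;
\item a Hamilton cycle of $D-V(A)$ (Lemma~\ref{covering-lemma}), cut into paths $Q_i=x_i\cdots y_i$ of the required orders;
\item absorption of each $Q_i$ through a pair $(u,v)\in K$ covering $(x_i,y_i)$, by switching the rungs of $L_{u,v}$ to the alternative rungs (Lemma~\ref{lemma2}, Corollary~\ref{coroy1}). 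This enlarges the subdivided path carrying $L_{u,v}$ by exactly the vertices of $Q_i$ and leaves the branch vertices and all other subdivided paths untouched.
\end{itemize}

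Finally, your closing claim that $C_0$ depends only on $h$ contradicts your own Step~1, whose gadgets have length of order $\nu^{-2}$. No argument can remove the dependence on $\gamma$. For small $\gamma$, index $V(D)=\{w_1,\dots,w_n\}$ and put $m=(\frac12-2\gamma)n$. Let each $w_i$ with $i\le m$ send arcs exactly to $\{w_j: j\le i+\gamma n+1,\ j\ne i\}$, and let each $w_i$ with $i>m$ send arcs to all other vertices. Then $d_i^+\ge i+\gamma n$ for all $i<n/2$, and every in-degree is at least $(\frac12+2\gamma)n-1$, so the condition holds. Yet every path from $w_1$ to $w_n$ has length at least $(m-1)/(\gamma n+1)\approx\frac{1}{2\gamma}-2$. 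So the quantifier order in the statement cannot be met by any argument. The paper's proof carries the same hidden dependence: its absorber puts at least $2|K|=\Omega(\gamma^{-2}\log n)$ vertices on the subdivided paths, so it even needs $\sum_i l_i=\Omega(\log n)$.
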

Theorem \ref{szc1} can yield several significant consequences. In particular, when $H$ is a single directed cycle, the same asymptotic degree sequence conditions imply that for any vertex $v\in V(D)$ and every integer $l\geq C_0$, the digraph $D$ contains a directed cycle of length $l$ passing through $v$. Moreover, for any $k$ disjoint arcs $\{x_iy_i\}_{i=1}^k$ in $D$, by taking $H=y_1x_1\cup\cdots\cup y_kx_k$, the result also guarantees the existence of $k$ disjoint directed cycles $C_1, \ldots, C_k$ in $D$ such that each $C_i$ contains exactly the arc $x_iy_i$ and has length $n_i$ for any $n_i\geq C_0$. Besides, for foundational results on the $H$-subdivision and the $H$-linked problems under semi-degree conditions in digraphs, we refer to \cite{Cheng1, Cheng,Jacobson,20081,20082,Lee,Zhou}.

\smallskip

We further investigate perfect $H$-subdivision tilings. For a digraph $H$, a \emph{perfect $H$-subdivision tiling} in a digraph $D$ is a collection of vertex-disjoint union of subdivisions of $H$ that collectively cover all vertices of $D$. This work focuses on identifying the critical thresholds in out-degree and in-degree sequences to guarantee the existence of such  tilings. Here, we establish the asymptotically tight bounds. 
\begin{theorem}\label{szc2}
Let $H$ be a digraph with $h$ arcs and no isolated vertices. There exists a positive integer $C_0$ such that for any set of integers $\{n_1, \ldots, n_k\}$ with $n_i\geq C_0$ for all $i$, and for any real number $\gamma\in(0, 1)$, the following statement holds. There exist a positive integer $n_0=n_0(C_0, \gamma)$ such that if $D$ is a digraph on $n\geq n_0$ vertices and satisfies the asymptotic Nash-Williams condition, then $D$ contains a perfect $H$-subdivision tiling, where the order of each $H$-subdivision is $n_1,  \ldots, n_k$, respectively.
\end{theorem}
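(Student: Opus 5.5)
We will prove Theorem~\ref{szc2} by the absorption method in a robust outexpander, which is how the Nash-Williams-type condition is usually exploited. Implicitly $\sum_i n_i=n$ (otherwise no perfect tiling exists), and $k\le n/C_0$. The first step is structural. Following K\"uhn--Osthus--Treglown and the regularity-free proofs of K\"uhn--Osthus and Lo--Patel, we use that a digraph satisfying the asymptotic Nash-Williams condition either is a robust $(\nu,\tau)$-outexpander with linear minimum semi-degree after deleting a small exceptional set, or is close to one of a few extremal configurations (roughly, a near-bipartite or near-``two cliques'' structure). In the non-extremal case we also use that robust outexpanders with linear minimum semi-degree are highly linked: Theorem~\ref{szc1} gives $(\mathcal{N}H)$-linkedness with prescribed path lengths at least $C_0$. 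This is the main tool for connecting branch vertices with paths of exact lengths.

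The second step builds an absorbing structure. We choose a small vertex set $A$, of size $\varepsilon n$, such that for every set $W$ with $|W|\le \varepsilon^2 n$ the digraph $D[A\cup W]$ contains a Hamilton path between two fixed vertices. Equivalently, $A$ is a path $P_A$ that can absorb $W$. Since $D$ satisfies the Nash-Williams condition and is a robust expander, there are many short absorbing gadgets for each vertex, and random selection together with Theorem~\ref{szc1} (to link the gadgets) gives $P_A$. The point is that each subdivision is built so that one designated subdivided path, the longest in the largest piece or simply the path of the first copy, contains $P_A$. Absorbing leftover vertices then only lengthens that path, and we fix its final length in advance.

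The third step tiles the rest. We remove $P_A$, and the degree conditions survive with $\gamma$ replaced by $\gamma/2$. We then greedily embed the $k$ subdivisions with prescribed orders $n_1,\dots,n_k$ in $D-V(P_A)$, except for a budget of about $\varepsilon^2 n$ vertices. For each copy we pick unused branch vertices and apply Theorem~\ref{szc1} in the remaining digraph, with lengths chosen so that the total order is $n_i$. Copies that are to receive the absorber are built with one path that passes through $P_A$, leaving exactly the right slack. When only $\varepsilon^2 n$ vertices remain uncovered, the absorbing property of $A$ inserts them into that path. The length bookkeeping works because all path lengths only need to be at least $C_0$, so any target order $n_i\ge C_0$ can be split among the $h$ arcs.

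The main obstacle is that the greedy phase cannot be run until the very end. Once a linear number of vertices are used, the remaining digraph need not satisfy the degree sequence condition, since vertices of small degree may lose their neighbours. We would handle this as in the Hamilton-cycle proofs. First we cover the exceptional low-degree vertices early, embedding them as interior vertices of paths while the digraph is still intact. We then work inside the robust outexpander, where deleting a random-like set preserves expansion. Concretely, we would reserve a random set $R$ of size $\sqrt{\varepsilon}n$ before the greedy step and use it only for the connecting paths. Robust outexpansion passes to $D[R\cup X]$ for any small $X$, so Theorem~\ref{szc1} still applies there. The extremal cases need a separate, more hands-on argument that balances the sizes of the two parts, and we expect this to be where the most delicate counting arises.
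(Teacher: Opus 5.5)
Your plan has a genuine gap in the absorber, and it sits exactly in the regime where the statement cannot be proved.

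\textbf{The absorber needs a linear-size tile.} Your $P_A$ has $\varepsilon n$ vertices and lies inside a single subdivided path of a single tile. So the argument needs some $n_j\ge \varepsilon n+|W|$, and nothing in the statement provides this. As you note, $k$ can be as large as $n/C_0$. If all $n_i$ are bounded, or merely sublinear, no tile can host $P_A$, and the leftover would have to be spread over $\Theta(n)$ tiles of prescribed bounded order.

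\textbf{This regime is genuinely false.} Read with $C_0$ fixed before $\gamma$ and repeated orders allowed, the statement fails there. Let $V(D)=X\cup Y$ with $X$ independent and $|X|=\lfloor(1-\gamma)n/2\rfloor-1$. Let $D[Y]$ be complete, with all arcs present in both directions between $X$ and $Y$. Then $d_i^\pm=n-|X|\ge i+\gamma n$ for $i\le |X|$ and $d_i^\pm=n-1$ otherwise, so $D$ satisfies the asymptotic Nash-Williams condition (indeed $\delta^0(D)>n/2$). Let $H$ be the directed triangle. Every $H$-subdivision is then a directed cycle, and a directed cycle of odd order $t$ contains at most $(t-1)/2$ vertices of $X$. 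Take $t\in\{C_0,C_0+1\}$ odd, $\gamma<1/(C_0+1)$, $t\mid n$ and all $n_i=t$. A perfect tiling would cover at most $\frac{t-1}{2t}n<|X|$ vertices of $X$ once $n$ is large. So any true version needs $C_0$ to grow at least like $1/\gamma$, and any proof needs an absorbing mechanism distributed over linearly many tiles. Your plan has no such mechanism.

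\textbf{The paper's route does not supply one either.} It builds a Type-II absorber (Lemma~\ref{thm3}) with one ladder-carrying subdivision per tile. It then covers $D-V(A)$ by a Hamilton cycle (Lemma~\ref{covering-lemma}), cuts the cycle into $k$ segments, and absorbs segment $i$ into tile $i$ via Lemma~\ref{coro1}(ii). But that lemma absorbs at most $d=O(\nu^{-2})$ paths into an absorber with $O(\log n)$ vertices. Moreover, the $d$-covering does not force the ladder covering the ends of segment $i$ to lie in tile $i$. So that argument only addresses a bounded number of tiles.

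\textbf{Further repairs needed even when some $n_j$ is linear.}
\begin{enumerate}
\item The structural dichotomy is misremembered. By K\"uhn--Osthus--Treglown (Lemma~\ref{lm1}), $D$ itself is a robust $(\tau^2,\tau)$-outexpander with $\delta^0(D)\ge\gamma n$. There is no exceptional set, and the extremal case you leave open never arises.
\item Theorem~\ref{szc1} assumes the Nash-Williams condition, which $D-V(P_A)$ and $D[R\cup X]$ need not satisfy. Their robust expansion only entitles you to the expander version, Theorem~\ref{song1}.
\item The covering step does not close. Short connecting paths use few vertices of $R$, leaving about $\sqrt{\varepsilon}n\gg\varepsilon^2n$ reservoir vertices uncovered, far more than $P_A$ can absorb. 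Nothing in the greedy phase covers $V\setminus R$ once the remainder stops expanding. The paper sidesteps this by covering all of $D-V(A)$ at once with a Hamilton cycle that is then cut into segments of the required orders, so the absorber only has to take $k$ paths.
\item The vertex-absorbing property of $P_A$ is asserted without gadgets. At semi-degree $\gamma n$ a vertex $w$ can have no arc from $N^-(w)$ to $N^+(w)$. For example, give $w$ disjoint in- and out-neighbourhoods of size $2\gamma n$ in an otherwise complete digraph, and delete all arcs from the former to the latter; this still satisfies the asymptotic Nash-Williams condition. Then single-vertex insertions need not exist, which is what the covering pairs and ladders of Lemma~\ref{thm3} are designed to get around.
\end{enumerate}
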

Within this framework, Nash-Williams \cite{Nash1} also proposed the following weakening of Conjecture \ref{Na1}. It would yield a directed analogue of P\'{o}sa's theorem.
\begin{conjecture} \cite{Nash1}\label{Nn}
Let $D$ be a digraph on $n\geq3$ vertices such that $d_i^+, d_i^-\geq i+1$ for all $1\leq i<(n-1)/2$ and such that additionally $d^+_{\lceil n/2\rceil}, d^-_{\lceil n/2\rceil}\geq\lceil n/2\rceil$ when $n$ is odd. Then $D$ contains a Hamilton cycle.
\end{conjecture}
In particular, Theorems \ref{szc1} and \ref{szc2} immediately imply a corresponding approximate version of Conjecture \ref{Nn} for sufficiently large digraphs. Furthermore, we can obtain the following conclusion.
\begin{corollary}
Let $H$ be a digraph with $h$ arcs and no isolated vertices. There exists a positive integer $C_0$ such that for any $\gamma\in(0, 1)$ and a positive integer $n_0=n_0(C_0, \gamma)$, if $D$ is a digraph on $n\geq n_0$ vertices satisfying that $d_i^+, d_i^-\geq i+\gamma n$ for all $1\leq i<(n-1)/2$ and also $d^+_{\lceil n/2\rceil}, d^-_{\lceil n/2\rceil}\geq(\frac{1}{2}+\gamma)n$ when $n$ is odd, then
\smallskip

$(i)$ $D$ is $(\mathcal{N}H)$-linked, for any set of integers $\mathcal{N}=\{l_1, \ldots, l_h\}$ with $l_i\geq C_0$ for all $i$;

$(ii)$ for any integer set $\{n_1, \ldots, n_k\}$ with $n_i\geq C_0$ for all $i$, $D$ contains a perfect $H$-subdivision tiling where each $H$-subdivision has order $n_i$.
\end{corollary}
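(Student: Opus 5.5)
The plan is to reduce the corollary to Theorems \ref{szc1} and \ref{szc2}. It suffices to show that the hypothesis (P\'osa-type degree sequence bounds with slack $\gamma n$) implies the asymptotic Nash-Williams condition, possibly with a smaller parameter $\gamma'$. We then take $C_0$ to be the constant from those theorems and $n_0$ large in terms of $\gamma'$. Concretely, we set $\gamma'=\gamma/2$ and verify conditions $(i)$ and $(ii)$ of Theorem \ref{q1} with $\gamma'$ in place of $\gamma$. By symmetry between in- and out-degrees it is enough to treat $(i)$: for every integer $0\le i<n/2$ we must show $d_i^+\ge i+\gamma' n$ or $d^-_{n-i-\gamma' n}\ge n-i$. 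We will always establish the first alternative.

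For $1\le i<(n-1)/2$ the hypothesis gives $d_i^+\ge i+\gamma n\ge i+\gamma' n$ directly. For $i=0$ there is nothing to check under the usual convention that $d_0^+$ is vacuous or that the condition starts at $i\ge1$; otherwise we use $d_0^+:=d_1^+\ge 1+\gamma n$. The only remaining index is the boundary value with $(n-1)/2\le i<n/2$. This forces $n$ odd and $i=(n-1)/2=\lceil n/2\rceil-1$. Here monotonicity and the extra hypothesis give
\[
d_i^+\ \ge\ d^+_{\lceil n/2\rceil-1}\ \ldots
\]
More precisely, we use the additional assumption at index $\lceil n/2\rceil$ together with $d_i^+\le d^+_{\lceil n/2\rceil}$ in the wrong direction. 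So instead we argue that for $i=(n-1)/2$ the earlier index $i-1<(n-1)/2$ satisfies $d_{i-1}^+\ge i-1+\gamma n$, and monotonicity gives $d_i^+\ge d^+_{i-1}\ge i+\gamma n-1\ge i+\gamma' n$ for $n$ large. The same argument handles $(ii)$ with in-degrees.

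Having verified the asymptotic Nash-Williams condition with parameter $\gamma'$, we apply Theorem \ref{szc1} to obtain $(i)$ and Theorem \ref{szc2} to obtain $(ii)$. For this we choose $n_0=\max\{n_0^{\ref{szc1}}(C_0,\gamma'),n_0^{\ref{szc2}}(C_0,\gamma')\}$ and also large enough that $\gamma n/2\ge1$. The constant $C_0$ depends only on $H$ in both theorems; if the two constants differ we take the maximum, since the statements are monotone in $C_0$.

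The main technicality is the boundary index when $n$ is odd, together with the bookkeeping of the $\gamma$ loss and the noninteger subscript $n-i-\gamma n$. The latter is harmless because we never use the second alternative. The extra hypothesis on $d_{\lceil n/2\rceil}$ is therefore not even needed for this reduction. We include it only to mirror Conjecture \ref{Nn}.
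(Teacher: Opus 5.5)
Your reduction is correct and follows the paper's route. The paper only remarks that Theorems \ref{szc1} and \ref{szc2} immediately imply the corollary, and your check supplies the omitted step: the hypothesis gives the asymptotic Nash-Williams condition with parameter $\gamma/2$, and the only nontrivial index, $i=(n-1)/2$ for odd $n$, follows from monotonicity at a loss of $1\le\gamma n/2$. When writing it up, delete the aborted first attempt at that boundary index, the one that tries to use $d^+_{\lceil n/2\rceil}$ in the wrong direction.
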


\medskip

\noindent \textbf{Organization.} This article applies the absorption method, which was first introduced by R\"{o}dl, Ruci\'{n}ski and Szemer\'{e}di \cite{Rodl}. We first establish that any digraph satisfying the degree sequence conditions in Theorems \ref{szc1} and \ref{szc2} is a robust outexpander (Theorems \ref{song1} and \ref{song2}). Secondly, applying Lemma \ref{thm3}, we can construct a special absorbing structure $A$, where $A$ is an $H$-linked subdigraph for Theorem \ref{szc1}, and is the union of disjoint $H$-linked subdigraphs for Theorem \ref{szc2}. Lemma \ref{covering-lemma} serves as our covering lemma. Ultimately, we partition $D-A$ into disjoint paths of suitable lengths and utilize the absorbing property of $A$ to incorporate these paths, thereby completing the proofs of the main theorems.

\smallskip

The rest of the paper is structured as follows. In Section $2$, we present relevant definitions and notations, and then draw relevant conclusions. Moving on to Section $3$, we present the proofs of Theorems \ref{szc1} and \ref{szc2}. Finally, in Section $4$ we summarize by presenting some straightforward but significant findings that stem from the main theorems, and pose an unresolved problem for future exploration.
\section{Preparations for Theorems \ref{szc1} and \ref{szc2}}
\subsection{Notations}
For notations not defined in this paper, we refer the reader to \cite{Bang-Jensen3}.
Let $D=(V, A)$ be a digraph. For any $X\subseteq V$ and $\sigma\in\{-, +\}$, we define $N^\sigma(u, X)=N^\sigma(u)\cap X$ and $d^\sigma_X(u)=|N^\sigma(u, X)|$ for any vertex $u$ in $V$. The cardinality of $X$ is denoted by $|X|$, and we say $X$ is an $i$-set if $|X|=i$. The subdigraph of $D$ induced by $X$ is defined as $D[X]$. For another vertex set $Y$ that is not necessarily disjoint from $X$, we use $e^+(X, Y)$ to represent the number of arcs from $X$ to $Y$. The \emph{out-neighbourhood} (resp., \emph{in-neighbourhood}) of a vertex $v$ in $D$ is defined as $N^{+}(v)=\{u: vu\in A\}$ (resp., $N^{-}(v)=\{w: wv\in A\}$). The \emph{out-degree} (resp., \emph{in-degree}) of $v$ in $D$, which is denoted by $d^+(v)$ (resp. $d^-(v)$), is the cardinality of $N^{+}(v)$ (resp., $N^{-}(v)$), that is, $d^{+}(v)=|N^{+}(v)|$ (resp., $d^{-}(v)=|N^{-}(v)|$). The \emph{minimum out-degree} of $D$ is defined as $\delta^+(D)=\min\{d^{+}(v): v\in V\}$ and the \emph{minimum in-degree} as $\delta^-(D)=\min\{d^{-}(v): v\in V\}$. The \emph{minimum semi-degree} of a digraph $D$ to be $\delta^0(D)=\min\{\delta^+(D), \delta^-(D)\}$ and the \emph{minimum degree} to be $\delta(D)=\min_{x\in V}\{d(x): d(x)=d^+(x)+d^-(x)\}$.

\smallskip

We define the number of arcs of a path as its \emph{length} and a \emph{$k$-path} refers to a path of order $k$. We often represent the $k$-path $P$ as $v_1\cdots v_k$ when $V(P)=\{v_1, \ldots, v_k\}$ and call $v_1$ and $v_k$ the \emph{initial} and the \emph{terminal} of $P$, respectively. All paths (cycles) in digraphs refer to directed paths (cycles), and we use the term \emph{disjoint} instead of vertex-disjoint for simplicity.
Given a family of graphs $\mathcal{F}$, denote by $|\mathcal{F}|$ the number of graphs in $\mathcal{F}$ and we write $V(\mathcal{F})=\bigcup_{F\in \mathcal{F}}V(F)$.

\smallskip

For a positive integer $t$, we simply write $\{1, \ldots , t\}$ as $[t]$. We use $\mathbb{N}$ to represent the set of all positive integers. Throughout this paper, we will omit floor and ceiling signs when they are not essential. Also,
we use standard hierarchy notation, that is, we write $0<\alpha\ll\beta\ll\gamma$ to mean that we can choose the
constants $\alpha, \beta, \gamma$ from right to left. More precisely, there are increasing functions $f$ and
$g$ such that, given $\gamma$, whenever we choose $\beta\leq f(\gamma)$ and $\alpha\leq g(\beta)$, all calculations needed
in our proof are valid.
\subsection{Preparations}
In this section, we establish the key properties of robust expanders that underpin our main results. We begin by formalizing the central concepts.
\begin{definition}
\emph{Let $\nu$ and $\tau$ be real numbers with $0<\nu\leq\tau<1$. Suppose that $D$ is a digraph and $S$ is a vertex subset of $V(D)$. The \emph{$\nu$-robust out-neighbourhood $RN^+_{\nu, D}(S)$ of $S$} is defined as the set of all vertices $x$ in $D$ that have at least $\nu|V(D)|$ in-neighbourhoods in $S$. Moreover, the digraph $D$ is called a \emph{robust $(\nu, \tau)$-outexpander} if
\begin{equation*}
\begin{split}
|RN^+_{\nu, D}(S)|\geq|S|+\nu|V(D)|
\end{split}
\end{equation*}
for every $S\subseteq V(D)$ with $\tau|V(D)|<|S|<(1-\tau)|V(D)|$. The \emph{$\nu$-robust in-neighbourhood $RN^-_{\nu, D}(S)$ of $S$} and \emph{robust $(\nu, \tau)$-inexpander} are defined similarly. We refer to $D$ as a \emph{robust $(\nu, \tau)$-expander} if it is both a robust $(\nu, \tau)$-in and -outexpander.}
\end{definition}
The theory of robust expansion has served as a pivotal technical foundation in resolving multiple longstanding conjectures regarding the packing of (edge-disjoint ) Hamilton cycles and paths in digraphs, see \cite{W1,W2,W3,W4,W5,W6}. Critically, this definition directly establishes our primary structural characterization of the resilience of robust expanders to vertex removal.
\begin{proposition}\label{prop1}
Let $0<\nu\leq\tau<1$ and let $D$ be an $n$-vertex robust $(\nu,\tau)$-outexpander. For any vertex subset $V_0\subseteq V(D)$ with $|V_0|\leq \nu n/4$, the subdigraph $D_0:=D-V_0$ remains a robust $(\nu/2, 2\tau)$-outexpander.
\end{proposition}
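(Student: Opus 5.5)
Write $n_0:=|V(D_0)|=n-|V_0|$, so $(1-\nu/4)n\le n_0\le n$. We check the definition of a robust $(\nu/2,2\tau)$-outexpander directly. Fix $S\subseteq V(D_0)$ with $2\tau n_0<|S|<(1-2\tau)n_0$. The plan has three steps. First, show that $S$ lies in the range where the expansion of $D$ applies. Second, compare robust out-neighbourhoods in $D$ and in $D_0$. Third, subtract the loss caused by deleting $V_0$.

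\emph{Range.} Since $n_0\ge (1-\nu/4)n$ and $\nu\le\tau<1$, we get $|S|>2\tau(1-\nu/4)n\ge 2\tau\cdot\tfrac34 n>\tau n$. Also $|S|<(1-2\tau)n_0\le(1-2\tau)n<(1-\tau)n$. So $S$ is a legal set for $D$, and
$$|RN^+_{\nu,D}(S)|\ge |S|+\nu n .$$

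\emph{Comparison.} Let $x\in RN^+_{\nu,D}(S)\setminus V_0$. Then $x$ has at least $\nu n$ in-neighbours in $S$. Since $S\subseteq V(D_0)$, all of these arcs lie in $D_0$. Moreover $\nu n\ge \tfrac{\nu}{2}n_0$, so $x\in RN^+_{\nu/2,D_0}(S)$. Hence
$$|RN^+_{\nu/2,D_0}(S)|\ge |RN^+_{\nu,D}(S)|-|V_0|\ge |S|+\nu n-\tfrac{\nu}{4}n=|S|+\tfrac{3\nu}{4}n\ge |S|+\tfrac{\nu}{2}n_0 .$$
This is exactly the required expansion.

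\emph{Main obstacle.} The only delicate point is the lower end of the range. We must ensure that the threshold $2\tau n_0$ for $D_0$ still exceeds $\tau n$. This needs $n_0>n/2$, which follows from $|V_0|\le \nu n/4<n/2$. The $\nu$-robust thresholds measured relative to $n$ and to $n_0$ differ by at most a factor $1-\nu/4$, and the halving of $\nu$ absorbs this difference. So the argument is a straightforward check of constants, and no result beyond the definition is needed.
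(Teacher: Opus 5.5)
Your proof is correct and follows the same route as the paper's. Both arguments take $x\in RN^+_{\nu,D}(S)\setminus V_0$, use $\nu n\ge \nu n_0/2$, and subtract $|V_0|\le \nu n/4$ from $|S|+\nu n$. You also check explicitly that $\tau n<|S|<(1-\tau)n$, so that the expansion of $D$ really applies to $S$, and you note that no in-neighbours in $S$ are lost at all; the paper leaves the range check implicit.
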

\begin{proof}
Let $n_0:=|V(D_0)|$. The removal of $|V_0|\leq\nu n/4$ vertices implies $(1-\nu/4)n\leq n_0\leq n$. Consider any $S\subseteq V(D_0)$ with $2\tau n_0\leq|S|\leq(1-2\tau)n_0$. For each $x\in RN_{\nu, D}^+(S)\setminus V_0$, the in-neighborhood loss satisfies $|N^-_{D_0}(x)\cap S|\geq\nu n-\nu n/4\geq\nu n_0/2$, preserving the robustness threshold. Consequently, $|RN^+_{\nu/2, D_0}(S)|\geq|RN^+_{\nu, D}(S)|-|V_0|$. Since $D$ is a robust $(\nu,\tau)$-outexpander, the original bound $|RN^+_{\nu, D}(S)|\geq|S|+\nu n$ holds. Combining this with $|V_0|\leq\nu n/4$, we derive $|RN^+_{\nu/2, D_0}(S)|\geq|S|+\nu n-\nu n/4\geq|S|+\nu n_0/2$, which satisfies the definition of a robust $(\nu/2, 2\tau)$-outexpander for $D_0$.
\end{proof}
DeBiasio observed (in private correspondence) that robust outexpansion implies robust inexpansion. In \cite{K3}, Lo and Patel reproduced the proof explicitly quantifying the relationships between the various parameters.
\begin{proposition}\label{prop2} \cite{K3}
Let $\nu, \tau, \gamma$ be real numbers with $2\tau<\gamma<1, \nu^{2}/2<\tau\gamma$, $0<\nu\leq\tau<1$ and $\nu<1/2$. Suppose $D$ is an $n$-vertex robust $(\nu,\tau)$-outexpander with $\delta^{0}(D)\geq\gamma n$. Then $D$ is a robust $(\nu^{2},2\tau)$-inexpander.
\end{proposition}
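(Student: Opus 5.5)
The plan is to prove the statement by contradiction, via a double count of the arcs between a set and the complement of its robust in-neighbourhood. Fix $S\subseteq V(D)$ with $2\tau n<|S|<(1-2\tau)n$. Suppose, for a contradiction, that $|RN^-_{\nu^2,D}(S)|<|S|+\nu^2 n$. Put $X:=RN^-_{\nu^2,D}(S)$, the set of vertices with at least $\nu^2 n$ out-neighbours in $S$. Put $Y:=V(D)\setminus X$, so every $y\in Y$ has fewer than $\nu^2 n$ out-neighbours in $S$. The idea is to apply robust outexpansion to a suitable set built from $Y$ and compare its robust out-neighbourhood with $V\setminus S$.

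\textbf{Step 1: $Y$ lies in the expanding range.} We have $|Y|>n-|S|-\nu^2 n$. Since $|S|<(1-2\tau)n$ and $\nu^2<2\tau\gamma<2\tau$, this gives $|Y|>2\tau n-\nu^2 n>\tau n$.

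If $|Y|\ge(1-\tau)n$, replace $Y$ by a subset of size just below $(1-\tau)n$; the property "few out-neighbours in $S$" passes to subsets. Such a subset still has size $\geq n-|S|-\nu^2n$ whenever that is smaller. The boundary cases need care, and the lower bound $|S|>2\tau n$ is exactly what guarantees $n-|S|<(1-2\tau)n$.

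\textbf{Step 2: apply outexpansion to $Y$.} Robust outexpansion gives $|RN^+_{\nu,D}(Y)|\ge |Y|+\nu n$. Now count the arcs from $Y$ to $S$. On one side,
$$e^+(Y,S)<|Y|\nu^2 n\le \nu^2 n^2.$$
On the other side, every $z\in RN^+_{\nu}(Y)\cap S$ receives at least $\nu n$ arcs from $Y$, so
$$e^+(Y,S)\ge \nu n\,|RN^+_\nu(Y)\cap S|.$$
Hence $|RN^+_\nu(Y)\cap S|<\nu n$, and therefore
$$|RN^+_\nu(Y)\setminus S|> |Y|+\nu n-\nu n=|Y|.$$
This is the main inequality to exploit, and it gives
$$n-|S|\ge |RN^+_\nu(Y)\setminus S|>|Y|>n-|S|-\nu^2n.$$
This alone is not yet a contradiction, and this is where I expect the difficulty to lie.

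\textbf{Step 3: obtaining the contradiction.} My first attempt is to apply outexpansion instead to $Y':=Y\cup T$, where $T\subseteq S$. We need $e^+(Y',S)$ to stay small while $Y'$ still expands, so the choice of $T$ matters.

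Alternatively, use the minimum-degree hypothesis. Each $y\in Y$ has $d^+(y)\ge\gamma n$ but fewer than $\nu^2 n$ out-neighbours in $S$, so $y$ has at least $(\gamma-\nu^2)n$ out-neighbours in $V\setminus S$. Now consider the set $S':=V\setminus S$, whose size lies strictly between $2\tau n$ and $(1-2\tau)n$. Applying robust outexpansion to $S'$ gives $|RN^+_\nu(S')|\ge |S'|+\nu n$, so at least $\nu n$ vertices of $S$ lie in $RN^+_\nu(S')$. I would then double-count $e^+(V\setminus S,S)$ against in-degrees: every vertex of $S$ has in-degree at least $\gamma n$, so
$$e^+(V,S)\ge\gamma n|S|\ge 2\tau\gamma n^2.$$
Split this count into arcs from $X$ and arcs from $Y$. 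The arcs from $Y$ contribute fewer than $\nu^2n^2$, and $\nu^2<2\tau\gamma$ by hypothesis. So the arcs from $X$ must carry most of the weight. This forces $X$, and hence $|X|$, to be large relative to $S$: each vertex of $X$ sends at most $|S|$ arcs into $S$, which gives $|X|\ge\gamma n-\nu^2n/(2\tau)$ or similar.

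I would then combine this with the outexpansion of $X$'s complement to upgrade the bound to $|X|\ge|S|+\nu^2n$. The parameter condition $\nu^2/2<\tau\gamma$ is presumably exactly what makes the arithmetic close, which signals that this double count is the intended route.
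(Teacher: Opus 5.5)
\textbf{There is a genuine gap: the proposal never reaches a contradiction.} The cause is a lossy estimate in Step~2, not a missing idea in Step~3. Your Steps~1--2 are the right framework; they are essentially the argument of DeBiasio as reproduced by Lo and Patel, which the paper only cites.

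You correctly get $e^+(Y,S)<\nu^2 n|Y|$, but then weaken it to $\nu^2n^2$. That yields only $|RN^+_\nu(Y)\cap S|<\nu n$, which cancels the whole expansion gain $\nu n$. Keep the factor $|Y|$. Dividing by $\nu n$ gives $|RN^+_\nu(Y)\cap S|<\nu|Y|$, and hence
\[
n-|S|\ \ge\ |RN^+_{\nu}(Y)\setminus S|\ >\ |Y|+\nu n-\nu|Y|\ =\ |Y|+\nu(n-|Y|).
\]
Since $|Y|<(1-\tau)n$, we have $\nu(n-|Y|)>\nu\tau n\ge\nu^2 n$. Together with $|Y|>n-|S|-\nu^2n$, the right-hand side exceeds $n-|S|$, which is the contradiction. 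The same computation works for a truncated $Y'\subseteq Y$ with $n-|S|-\nu^2n\le |Y'|<(1-\tau)n$.

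So the inequality that closes the argument is $\nu\le\tau$, not $\nu^2/2<\tau\gamma$. In fact the latter is redundant, since $\nu^2/2\le\tau^2/2<\tau\gamma$ when $\tau<\gamma/2$.

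Step~3 as written would not finish the proof. The min-degree count gives $|X|>\gamma n-\nu^2n/(2\tau)$, a bound that does not grow with $|S|$. It therefore cannot by itself give $|X|\ge|S|+\nu^2n$, for instance when $\gamma<1-2\tau$ and $|S|$ is near $(1-2\tau)n$. Also, ``outexpansion of $X$'s complement'' is just Step~2 again, since that complement is $Y$.

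What the count \emph{does} give is the real role of $\delta^0(D)\ge\gamma n$. Using $\nu\le\tau$ and $\gamma>2\tau$, it yields $|X|>\gamma n-\tau n/2>\tau n$, i.e.\ $|Y|<(1-\tau)n$. This lets you drop the truncation in Step~1 and its boundary cases.

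A minor point in Step~1: $2\tau n-\nu^2n>\tau n$ needs $\nu^2<\tau$. That holds because $\nu\le\tau<1$; it does not follow from $\nu^2<2\tau\gamma$.
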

The following lemma says that the robust expansion allows us to construct disjoint short paths between prescribed pairs of vertices.
\begin{lemma}\label{lamma1}\cite{K3}
Let $0<\nu\leq\tau\leq\gamma/4<1/4$ and let $n,r\in\mathbb{N}$ satisfy $n\geq
(6r+11)\nu^{-2}$. Suppose that $D$ is an $n$-vertex digraph which is a robust $
(\nu,\tau)$-expander and $\delta^{0}(D)\geq\gamma n$. Given $2r$ distinct vertices
$u_{1},\ldots,u_{r},v_{1},\ldots,v_{r}\in V(D)$, there exist disjoint
paths $P_{1},\ldots,P_{r}$ in $D$ where $P_{i}$ is from $u_{i}$ to $v_{i}$ and
$|P_{i}|\leq 2\nu^{-1}+1$.
\end{lemma}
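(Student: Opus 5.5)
We build the paths one at a time by a greedy procedure, using the robust expansion only in $D$ minus the previously used vertices. Set $W:=\{u_1,\ldots,u_r,v_1,\ldots,v_r\}$. Suppose $P_1,\ldots,P_{i-1}$ have been found, each with $|P_j|\leq 2\nu^{-1}+1$. Let $X_i$ be the set of vertices already used by these paths, together with the endpoints of the pairs not yet treated, but excluding $u_i,v_i$. Then
\[
|X_i|\leq (i-1)(2\nu^{-1}+1)+2r\leq r(2\nu^{-1}+3).
\]
By the hypothesis $n\geq(6r+11)\nu^{-2}$ this is a small fraction of $\nu n$, roughly $|X_i|\leq \nu n/3$. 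We then work in $D_i:=D-X_i$. The choice to remove the other endpoints in advance is only to keep the paths disjoint from later terminals.

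The core step finds a short $u_i$--$v_i$ path in $D_i$ by iterated robust expansion. Put $S_0:=N^+_{D_i}(u_i)$. Since $\delta^0(D)\geq\gamma n$ and $|X_i|$ is small, $|S_0|\geq \gamma n-|X_i|>\tau n$. Define
\[
S_{j+1}:=S_j\cup RN^+_{\nu,D}(S_j)\setminus X_i .
\]
Any vertex in $RN^+_{\nu,D}(S_j)$ has at least $\nu n>|X_i|$ in-neighbours in $S_j$; we add nothing from $X_i$, and every vertex of $S_j$ is reached from $u_i$ within $D_i$. As long as $|S_j|<(1-\tau)n$, robust outexpansion gives
\[
|S_{j+1}|\geq |S_j|+\nu n-|X_i|\geq |S_j|+\nu n/2 .
\]
Hence after at most $2\nu^{-1}$ steps we reach a set $S^+$ of size at least $(1-\tau)n$. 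Each vertex of $S^+$ is reached from $u_i$ by a path in $D_i$ whose length is bounded by the number of steps. Vertices added at a step were added because of in-neighbours in the previous layer, so paths can be traced backwards along the layers.

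Symmetrically, using that $D$ is a robust $(\nu,\tau)$-inexpander, we grow backwards from $N^-_{D_i}(v_i)$ and obtain a set $S^-$ with $|S^-|\geq(1-\tau)n$. To keep the total length at most $2\nu^{-1}+1$, it is better to balance the two sides: grow each side only until it exceeds $n/2$. Each side then takes about $(1/2)\nu^{-1}$ steps, since it starts at size at least $\gamma n\geq 4\tau n$ and gains at least $\nu n/2$ per step. Once both sets exceed $(n+|X_i|)/2$, they intersect in $D_i$, giving a vertex $w$ reachable from $u_i$ and reaching $v_i$. The walk $u_i\to w\to v_i$ contains a path $P_i$ of length at most about $\nu^{-1}+2$, well within $2\nu^{-1}+1$ vertices. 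Since $P_i$ lies in $D_i$, it avoids previous paths and other terminals.

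The main obstacle is the bookkeeping. We must check that the $\nu n$ robust in-neighbours always dominate the removed set $|X_i|$, which is exactly where $n\geq(6r+11)\nu^{-2}$ enters. We must also make sure the layered sets genuinely correspond to paths of the claimed length, and that the count of steps matches the bound $2\nu^{-1}+1$. That count is tight only up to the constants, so I would adjust the stopping thresholds accordingly. Proposition \ref{prop1} could alternatively be applied to $D_i$ directly, but its threshold $\nu n/4$ is the delicate point. If $|X_i|$ may exceed $\nu n/4$, the direct in-neighbour counting argument above is the safer route.
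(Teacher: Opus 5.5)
The paper does not prove this lemma; it quotes it from Lo--Patel \cite{K3}, so your argument has to stand on its own. Its structure is sound. You build the paths greedily, delete the used vertices and the future terminals, and grow layers using $RN^{\pm}_{\nu,D}$ in the whole of $D$, then intersect with $V(D_i)$. That device is the right one. We have $|X_i|\le (i-1)(2\nu^{-1}+1)+2(r-i)\le (r-1)(2\nu^{-1}+1)$, and this a priori bound can exceed $\nu n/4$ when $n$ is close to $(6r+11)\nu^{-2}$. So Proposition \ref{prop1} indeed cannot be invoked. Your growth step only needs $|X_i|<\nu n/2$. This holds because $\nu<1/4$ gives $2\nu^{-1}+1<\tfrac94\nu^{-1}$, while $\nu n/2\ge(3r+\tfrac{11}{2})\nu^{-1}$. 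This is exactly where $n\ge(6r+11)\nu^{-2}$ enters. The ``domination'' of $X_i$ by the $\nu n$ robust in-neighbours plays no role, since $S_j\subseteq V(D_i)$ already and a single in-neighbour in $S_j$ suffices.

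The step count, however, is wrong as written. Your per-step gain is $\nu n/2$ and your starting size is $|S_0|\ge(\gamma-\nu/2)n$. With these, the first $j$ with $|S_j|>n/2$ is only bounded by $(1-2\gamma)\nu^{-1}+2$, not by about $\tfrac12\nu^{-1}$; that figure would need a gain of $\nu n$ from an empty start. Hence the walk $u_i\to w\to v_i$ can have up to $\max\{2,\,2(1-2\gamma)\nu^{-1}+6\}$ arcs, not about $\nu^{-1}+2$. The lemma still follows, but only because of the head start. Since $\gamma\ge4\tau\ge4\nu$, we have $4\gamma\nu^{-1}\ge16$, so $|P_i|\le\max\{3,\,2\nu^{-1}-9\}\le 2\nu^{-1}+1$. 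You should write this computation out rather than defer it to ``adjusting the stopping thresholds'', since it is precisely where $\gamma\ge4\nu$ is spent.

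Two simplifications are available. First, stopping each side once it exceeds $n/2$ already forces an intersection, because $|V(D_i)|<n$; the threshold $(n+|X_i|)/2$ is unnecessary. Second, in-expansion is not needed. Grow forward until $|S_j|\ge(1-\tau)n$ and meet $N^-_{D_i}(v_i)$, which has more than $\tau n$ vertices. This gives a walk with at most $2(1-\gamma-\tau)\nu^{-1}+4$ arcs, which is again within the bound.
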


Robust expansion combined with degree conditions ensures strong structural properties. The following lemma formalizes this relationship by demonstrating how constraints on in-degree and out-degree sequences guarantee robust out-expansion.
\begin{lemma}\label{lm1}\cite{W5}
Let $n_0\in\mathbb{N}$ and $\tau, \gamma$ be real numbers with  $1/n_0\ll\tau\ll\gamma<1$. Let $D$ be a digraph on $n\geq n_0$ vertices satisfying the asymptotic Nash-Williams condition. Then $\delta^0(D)\geq\gamma n$ and $D$ is a robust $(\tau^2, \tau)$-outexpander.
\end{lemma}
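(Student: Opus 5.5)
The proof will follow the Kühn--Osthus--Treglown argument. Since the quantifier is ``every $\gamma\in(0,1)$'', the intended reading is that $D$ satisfies the asymptotic Nash-Williams condition for some $\gamma$ with $\tau\ll\gamma$. Throughout, $n$ is large and we write $d_i^\pm$ for the sorted degree sequences.

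\textbf{Step 1: minimum semi-degree.} Suppose $d_1^+<\gamma n$. Then condition $(i)$ with $i=1$ is violated in its first alternative, so $d^-_{n-1-\gamma n}\ge n-1$. Hence at least $\gamma n+2$ vertices have in-degree $n-1$, i.e. they receive arcs from every other vertex. In particular every vertex $v$ has out-degree at least $\gamma n+1$, which contradicts $d_1^+<\gamma n$. So $\delta^+(D)\ge\gamma n$, and symmetrically $(ii)$ gives $\delta^-(D)\ge\gamma n$. One may need to lose a constant factor and replace $\gamma$ by $\gamma/2$; since $\tau\ll\gamma$ this is harmless.

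\textbf{Step 2: robust outexpansion, setup.} Fix $S$ with $\tau n<|S|<(1-\tau)n$ and suppose, for contradiction, that $|RN^+_{\tau^2}(S)|<|S|+\tau^2 n$. Write $R=RN^+_{\tau^2}(S)$. Count arcs out of $S$ in two ways.
\begin{itemize}
\item From the degree side, $e^+(S,V)=\sum_{s\in S}d^+(s)$.
\item From the receiving side, $e^+(S,V)\le |S||R|+\tau^2 n^2$, since each vertex outside $R$ receives fewer than $\tau^2 n$ arcs from $S$.
\end{itemize}
Combining these shows the average out-degree of $S$ is at most about $|R|+\tau^2 n^2/|S|\le |S|+2\tau n$.

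\textbf{Step 3: case split on $|S|$.}
\begin{itemize}
\item \emph{Case $|S|\le n/2$.} Take $i\approx|S|-\tau n$; more precisely, choose $i$ so that at least $|S|-i$ vertices of $S$ have out-degree below $i+\gamma n$. A careful averaging is needed here: I would instead argue that at least $\sqrt{\tau}n$ vertices of $S$ have out-degree at most $|S|+\sqrt{\tau}n$, by Markov applied to the excess. This gives $d^+_{j}<j'+\gamma n$ with $j'\approx|S|$, so condition $(i)$ forces $d^-_{n-j'-\gamma n}\ge n-j'$. Thus more than $j'+\gamma n$ vertices have in-degree at least $n-j'\ge n-|S|$. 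Each such vertex misses at most $|S|$ in-neighbours, so it has many in-neighbours in $S$ unless it is nearly avoided by $S$. Making this precise shows all but about $|S|$ of these vertices lie in $R$, hence $|R|\ge |S|+\gamma n/2$, a contradiction.
\item \emph{Case $|S|>n/2$.} Pass to the complement $T=V\setminus R$, which is small. Vertices of $T$ have few in-neighbours in $S$, so their in-degree is at most $n-|S|+\tau^2 n$. Apply $(ii)$ with $i\approx n-|S|$ to get many vertices of large out-degree. These vertices must send arcs into $T$, or else $S$ would be forced to be small. The argument mirrors the first case with roles reversed.
\end{itemize}

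\textbf{Main obstacle.} The delicate step is the averaging in Step 3: converting ``$|R|$ small'' into ``the $i$-th smallest out-degree is $<i+\gamma n$'' for a suitable $i<n/2$. It is also where the hierarchy $\tau^2\ll\tau\ll\gamma$ is consumed, since the losses are of order $\sqrt{\tau}n$ against a gain of $\gamma n$. I would first try the Markov-type counting of the vertices of $S$ with out-degree exceeding $|S|+\sqrt{\tau}n$. Since this lemma is quoted from \cite{W5}, one can alternatively cite it directly.
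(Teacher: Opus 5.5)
\textbf{There is a genuine gap in Step 3.} Your Step 1 is correct, and no constant is lost: the second alternative of (i) at $i=1$ forces $\delta^+(D)\ge\gamma n+1$. Your architecture is also the right one: condition (i) for $|S|\le n/2$, and condition (ii) applied to $T=V\setminus R$ for $|S|>n/2$. The paper itself gives no proof and only cites K\"{u}hn--Osthus--Treglown. But the step you flag as the main obstacle is where the whole proof lives, and your proposed resolution fails.

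Averaging over all of $S$ only bounds the mean out-degree of $S$. To use the second alternative of (i) you need $d^+_i<i+\gamma n$ for an index $i$ close to $|S|$, i.e.\ about $|S|$ vertices of $D$ with out-degree below $i+\gamma n$. Your Markov claim yields only $\sqrt{\tau}n$ such vertices, and it does not even apply directly, since $d^+(v)-|S|$ may be negative. So it controls $d^+_{\sqrt{\tau}n}$, not $d^+_{|S|}$, and an inequality ``$d^+_j<j'+\gamma n$'' with $j\neq j'$ cannot be fed into (i). Even granting the second alternative at $i\approx|S|$, a vertex of in-degree at least $n-|S|$ need not have any in-neighbour in $S$, so nothing places these vertices in $R$. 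Your second case has the same lack of slack: with $i\approx n-|S|$, vertices of out-degree at least $n-i\approx|S|$ need not send any arcs into $T$.

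\textbf{The missing idea} is to shift the index by a margin between $\tau^2 n$ and $\gamma n$, and to apply your double count not to $S$ but to its high-degree part.

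For $|S|\le n/2$, take $i=|S|-\lfloor\tau n\rfloor$.
\begin{itemize}
\item If $d^+_i\ge i+\gamma n$, let $S'$ be the set of vertices of $S$ with out-degree at least $i+\gamma n$; then $|S'|>\tau n$. Counting arcs out of $S'$ gives $|R|\ge|RN^+_{\tau^2}(S')|\ge i+\gamma n-\tau^2n^2/|S'|\ge|S|+(\gamma-2\tau)n$.
\item If $d^+_i<i+\gamma n$, then (i) gives $i+\gamma n+1$ vertices of in-degree at least $n-|S|+\lfloor\tau n\rfloor$. Each has at least $\lfloor\tau n\rfloor\ge\tau^2 n$ in-neighbours in $S$, so all lie in $R$ and $|R|\ge|S|+(\gamma-\tau)n$.
\end{itemize}

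For $|S|>n/2$, assume $|R|<|S|+\tau^2 n$ and take $i=n-|S|-\lceil\tau n/2\rceil$ in (ii).
\begin{itemize}
\item The first alternative is impossible: $|T|>n-|S|-\tau^2 n\ge i$, while every vertex of $T$ has in-degree below $n-|S|+\tau^2 n<i+\gamma n$.
\item The second alternative gives at least $(\gamma-\tau)n$ vertices of $S$ with out-degree at least $|S|+\tau n/2$. Since $|V\setminus T|<|S|+\tau^2 n$, each of them sends more than $\tau n/4$ arcs into $T$. Hence $e^+(S,T)>\gamma\tau n^2/8>\tau^2 n^2>e^+(S,T)$, a contradiction.
\end{itemize}
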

Building upon Lemma \ref{lm1}, we demonstrate that the proofs of our central results (Theorems \ref{szc1} and \ref{szc2})  can be reduced to proving Theorems \ref{song1} and \ref{song2}, respectively.
\begin{theorem}\label{song1}
Let $H$ be a digraph with $h$ arcs and no isolated vertices. There exists a positive integer $C_0$ such that for any set of integers $\mathcal{N}=\{l_1, \ldots, l_h\}$ with $l_i\geq C_0$ for all $i\in[h]$, if $n_0=n_0(C_0)$ is a positive integer, and $\gamma, \tau, \nu$ are real numbers with $1/n_0\ll\nu\leq\tau\ll\gamma<1$, then the following statement holds. Suppose that $D$ is a digraph on $n\geq n_0$ vertices with $\delta^0(D)\geq\gamma n$ which is a robust $(\nu, \tau)$-outexpander. Then $D$ is $(\mathcal{N}H)$-linked.
\end{theorem}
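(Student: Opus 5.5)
We argue directly from robust expansion, using Lemma \ref{lamma1} for connections and an absorbing structure to hit the exact path lengths. First, by Proposition \ref{prop2} (with $\nu^2/2<\tau\gamma$ guaranteed by $\nu\le\tau\ll\gamma$), $D$ is also a robust $(\nu^2,2\tau)$-inexpander. Hence it is a robust $(\nu^2,2\tau)$-expander with $\delta^0(D)\ge\gamma n$. We set $C_0$ to be a large constant depending only on $h$ and on a fixed auxiliary expansion parameter. Since $\nu$ is allowed to tend to $0$ only through $n_0$, we cannot let $C_0$ depend on $\nu$. So the first step is to pass to a bounded-length connection tool: path lengths must be controlled by an absolute constant, not by $2\nu^{-2}+1$.

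The key device I would build is a \emph{length-adjusting gadget}. Using $\delta^0(D)\ge\gamma n$ and robust expansion, I would find, for any vertex pair, short paths of two consecutive lengths. Concretely, first find many short cycles, of lengths at most some $L=L(\gamma)$. Then, via a supersaturation or averaging argument on the $\gamma n$ out-neighbourhoods, locate a vertex lying on closed walks of coprime lengths. This yields a digraph of bounded size in which one can route a path between two endpoints with any prescribed length in a window $[a,a+L']$. Here I would try to exploit that $\delta^0\ge\gamma n$ forces many short cycles of lengths $\ell$ and $\ell+1$ for some $\ell\le 2/\gamma$. The main obstacle is precisely here: I must guarantee an aperiodicity property, with bounded constants independent of $\nu$ and $n$, that survives deletion of the already-used $O(h C_0)$ vertices. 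If a uniform bound fails, I would instead let $C_0$ be the length produced by Lemma \ref{lamma1} plus the gadget length, accepting dependence on $\gamma$ only via a cleaner expansion parameter.

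With gadgets in hand, we embed the arcs one at a time. Fix the branch set $U=f(V(H))$ and order the arcs $e_1,\dots,e_h$ with targets $l_1,\dots,l_h\ge C_0$. For arc $e_i=uv$ we do the following. Let $V_i$ be the set of vertices used so far, of size $O(\sum_{j<i} l_j)$. If the $l_j$ are bounded, we delete $V_i$ and apply Proposition \ref{prop1} to keep robust expansion with halved parameters. We then connect $f(u)$ to a fresh gadget by Lemma \ref{lamma1}, traverse the gadget with the adjusted length, and connect to $f(v)$ again by Lemma \ref{lamma1}. The gadget's flexible window makes the total length exactly $l_i$.

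Long lengths need separate handling, since $l_i$ may be linear in $n$ and deleting $\Theta(n)$ vertices breaks Proposition \ref{prop1}. For these I would first reserve disjoint small gadgets and connecting vertex sets, each of size $\nu n/(8h)$. I would then build the long segments inside a reserved random subset, which inherits robust expansion and the minimum semi-degree by Chernoff bounds. Inside that subset, a long path of almost exact length comes from a Hamilton-path-type argument on a random subset of the right size, using the robust-expander Hamiltonicity of \cite{W5,K2}. Finally, the gadget corrects the residual length. Checking that these pieces can be glued disjointly is routine bookkeeping given the hierarchy $1/n_0\ll\nu\le\tau\ll\gamma$.
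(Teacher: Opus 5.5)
\textbf{There is a genuine gap, and it sits where you flag it.} The length-adjusting gadget is never constructed, and the route you suggest for it does not work.

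First, minimum semi-degree alone gives no aperiodicity. Take $k=\lfloor 1/\gamma\rfloor\ge 2$. The balanced blow-up of the directed cycle $C_k$ has $\delta^0\ge\gamma n$, yet every closed walk in it has length divisible by $k$. So it has no short cycles of lengths $\ell$ and $\ell+1$, and no vertex lies on closed walks of coprime lengths. Any aperiodicity would have to come from robust expansion, and nothing in the proposal extracts it.

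Second, your goal of constants independent of $\nu$ cannot be met by any method. On $\{0,\dots,n-1\}$ put $i\to j$ whenever $i-\gamma n\le j\le i+\epsilon n$, and also make $\{0,\dots,\gamma n\}$ and $\{n-\gamma n-1,\dots,n-1\}$ complete. Then $\delta^0\ge\gamma n$. A covering argument on the in-windows shows that for $\epsilon=4\nu/\gamma$, $\nu\le\tau\gamma/3$ and $n$ large this digraph is a robust $(\nu,\tau)$-outexpander. However, every path from $0$ to $n-1$ has length at least about $(1-2\gamma)/\epsilon$, which for $\gamma=1/4$ is of order $1/\nu$. So $C_0$ must grow as $\nu\to 0$.

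Your fallback accepts that dependence, but it still needs the gadget. Lemma~\ref{lamma1} only bounds path lengths from above and gives no exact-length control.

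The rest of the plan also has holes:
\begin{enumerate}
\item A bounded-width window plus two short connectors only reaches bounded lengths. Intermediate values such as $l_i\approx\sqrt n$ therefore fall into neither of your cases.
\item The linear case relies on facts not established here: inheritance of robust expansion by random subsets (this is not a plain Chernoff argument), and exact-length paths between prescribed endpoints inside such subsets.
\end{enumerate}
The ``routine bookkeeping'' is where the actual work lies.

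\textbf{The missing idea is absorption.} It makes exact lengths automatic and removes both the parity problem and the case split on the size of $l_i$. The paper's proof runs as follows.
\begin{enumerate}
\item Lemma~\ref{thm3} builds a Type-I $d$-absorber. Covering pairs come from Lemma~\ref{lemma3} and a ladder for each pair from Lemma~\ref{lemma4}. Short connections from Lemma~\ref{lamma1} chain the ladders into an $H$-subdivision with the prescribed branch vertices, and every subdivided path $P_i$ carries embedded ladders.
\item Proposition~\ref{prop1} and Lemma~\ref{covering-lemma} give a Hamilton cycle $C$ of $D-V(A)$.
\item From $C$ one cuts disjoint segments $Q_i$ with exactly $l_i-|V(P_i)|+1$ vertices.
\item Each $Q_i$ is absorbed by switching the rungs of a ladder whose pair covers the ends of $Q_i$ (Lemma~\ref{lemma2}, Lemma~\ref{coro1}(i)).
\end{enumerate}
A rung switch adds exactly $V(Q_i)$ to the subdivided path carrying that ladder. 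Since $C$ can be cut at any length, each $l_i$ is hit exactly. One must also make sure the ladder used for $Q_i$ lies on $P_i$, e.g.\ by taking a separate covering family for each arc of $H$.

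Note that this argument also requires the $l_i$ to accommodate the ladders and connectors placed on the $P_i$. Their order depends on $\nu$, $\gamma$ and even $\log n$ (see Lemma~\ref{thm3}). So the paper does not produce a $\nu$-independent $C_0$ either, which is consistent with the example above.
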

\begin{theorem}\label{song2}
Let $H$ be a digraph with $h$ arcs and no isolated vertices. There exists a positive integer $C_0$ such that for any set of integers $\{n_1, \ldots, n_k\}$ with $n_i\geq C_0$ for all $i\in[k]$, if $n_0=n_0(C_0)$ is a positive integer and $\gamma, \tau$ and $\nu$ are real numbers with $1/n_0\ll\nu\leq\tau\ll\gamma<1$, then the following statement holds. Suppose that $D$ is a digraph on $n\geq n_0$ vertices with $\delta^0(D)\geq\gamma n$, which is a robust $(\nu, \tau)$-outexpander. Then $D$ contains a perfect $H$-subdivision tiling, where the order of the $H$-subdivision is $n_1, \ldots, n_k$, respectively.
\end{theorem}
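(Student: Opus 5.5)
We prove Theorem \ref{song2} by the absorption method, following the same scheme as for Theorem \ref{song1}. Fix the hierarchy $1/n_0\ll\nu\leq\tau\ll\gamma<1$ and an auxiliary $\nu'\ll\nu$. Since $\delta^0(D)\geq\gamma n$ and $D$ is a robust $(\nu,\tau)$-outexpander, Proposition \ref{prop2} makes $D$ a robust $(\nu^2,2\tau)$-expander. Together with Proposition \ref{prop1}, this property survives deleting any set of at most $\nu^2 n/4$ vertices. We may also assume that $\sum_i n_i=n$, since otherwise a perfect tiling is impossible.

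\textbf{Step 1: reserve small pieces.} Choose a small set $I\subseteq[k]$ of tiles that will absorb the leftover vertices; for instance, take the $m$ tiles of largest order, or a batch of $\Theta(\nu^2 n/C_0)$ tiles. The remaining tiles $j\notin I$ have bounded total size in each "short" class. I would partition them into those with $n_j$ small (bounded) and those that are large.
\begin{itemize}
\item Tiles of bounded order are embedded greedily. At each step we fix branch vertices and connect them by paths of the prescribed lengths. This uses Lemma \ref{lamma1} to obtain short paths in the remaining expander, and then lengthens them by inserting short cycles or detours of controlled length.
\item When only a $\nu^2/4$ fraction of vertices remains, we stop greedy embedding and switch to absorption.
\end{itemize}
More cleanly, I would first, via Lemma \ref{thm3}, build an absorbing structure $A$ of size $\leq\nu' n$. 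It consists of disjoint $H$-linked subdigraphs $A_j$, $j\in I$, each able to absorb any set of vertex-disjoint paths, up to a total of $\nu'^2 n$ vertices, into a subdivision whose subdivided paths have any prescribed long lengths.

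\textbf{Step 2: embed everything else.} In $D'=D-A$, which is still a robust expander with large semidegree, we embed all tiles $j\notin I$ disjointly with the exact orders $n_j$. Using Theorem \ref{song1} iteratively (in the subdigraph obtained by deleting previously used vertices, as long as it stays a robust expander) gives an $H$-subdivision of order exactly $n_j$. For this we choose arbitrary branch vertices and path lengths $l_1,\dots,l_h\geq C_0/h$ summing to $n_j-|V(H)|$. Once the used portion becomes large, I would instead place many tiles simultaneously via the covering Lemma \ref{covering-lemma}. This lemma covers almost all of $D'$ with disjoint paths of suitable lengths, which are then grouped and closed up into subdivisions using short connecting paths from Lemma \ref{lamma1}.

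\textbf{Step 3: absorb the leftover.} The uncovered vertices are partitioned into paths using Lemma \ref{covering-lemma}, with few leftover vertices handled as trivial paths. These paths are distributed among the $A_j$ so that each $A_j$ absorbs them and outputs an $H$-subdivision of order exactly $n_j$. Exactness of the counts is arranged by choosing $A_j$ of size below $n_j$ and assigning leftover mass accordingly.

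\textbf{Main obstacle.} The hard part is bookkeeping the orders. We need the remaining digraph to stay a robust expander throughout, since Proposition \ref{prop1} only permits small deletions. Yet the tiles cover everything, so a large portion must be embedded at once via the covering lemma rather than iteratively, with lengths matched exactly to $n_j$. I would handle this by doing most of the tiling through a single path cover of $D'$ with path lengths tailored to the $n_j$. The small number of connections and the absorbers are then reserved from a random subset at the start.
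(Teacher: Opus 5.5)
There is a genuine gap: both your absorption step and your bulk-embedding step rely on properties that the available tools do not give.

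\textbf{The absorbers.} The absorber of Lemma \ref{thm3} does not have the capacity you attribute to it. Through Lemma \ref{coro1} it absorbs at most $d$ paths (the paper takes $d=\lceil 64\nu^{-2}\rceil$, and Lemma \ref{thm3} needs $d^2\ll n$ in any case). Moreover, a path $x\cdots y$ can only be absorbed through a ladder $L_{u,v}$ with $(u,v)\in K$ covering $(x,y)$. So the capacity is measured in number of paths, not in a total of $\nu'^2 n$ vertices. Nor can you distribute leftover paths among the $A_j$ at will to hit the orders $n_j$: the covering pair, not you, decides which tile receives a given path. In Step 3 you also apply Lemma \ref{covering-lemma} to the uncovered vertices, but that set is an arbitrary small remnant, not a robust outexpander with linear semi-degree. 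Declaring the rest trivial paths then yields far more than $d$ paths.

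\textbf{The bulk tiles.} Step 2 has no mechanism forcing the bulk tiles to have order exactly $n_j$. Lemma \ref{lamma1} gives connecting paths of order at most $2\nu^{-1}+1$, not of prescribed length. It also needs a host that is still a robust expander, which fails once a path cover has consumed almost all of $D'$, since Proposition \ref{prop1} only tolerates deleting $\nu n/4$ vertices. Nothing in the toolkit produces paths or cycles of a prescribed bounded length. In fact bounded-order tiles need not exist at all. The digraph on $\mathbb{Z}_n$ with arcs $i\to i+1,\dots,i+\gamma n$ has $\delta^0=\gamma n$ and is a robust $(\nu,\tau)$-outexpander for suitable $\nu\ll\tau\ll\gamma$, yet it has no directed cycle shorter than $1/\gamma$. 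So for $H$ a directed triangle and $n_j<1/\gamma$ your greedy step must fail. In effect $C_0$ has to be large in terms of $\nu$ and $\gamma$.

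\textbf{What the paper does instead.} The missing idea is to give \emph{every} tile its own absorbing core, so that each tile absorbs exactly one path. The paper takes the Type-II absorber of Lemma \ref{thm3}: $k$ disjoint $H$-subdivisions $H_1,\dots,H_k$ with $|V(H_i)|<n_i$, each containing an embedded ladder. Lemma \ref{covering-lemma} then gives a single Hamilton cycle $C$ of $D-V(A)$. This cycle is cut into $k$ segments $P_i$ with $|V(P_i)|=n_i-|V(H_i)|$, and each $P_i$ is absorbed into $H_i$ through a ladder (Lemmas \ref{lemma2} and \ref{coro1}(ii)). Exactness then comes solely from the choice of segment lengths. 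There is no greedy phase, no closing-up step and no leftover.

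Be aware that the paper's write-up is itself thin exactly where your plan struggles:
\begin{itemize}
\item It needs $k\le d$ in Lemma \ref{coro1}, and each tile needs one of only $O(\log n)$ ladders, so it really handles boundedly many tiles.
\item It tacitly needs $C_0$ to exceed the core sizes, so $C_0$ depends on $\nu$.
\item It never explains why the pair in $K$ covering the ends of $P_i$ has its ladder inside $H_i$ rather than inside some other $H_j$.
\end{itemize}
Your proposal does not resolve these points either.
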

\noindent\emph{Remark.} In fact, analogous to the parameter optimization technique in \cite{K3}, we can refine the parameters of Theorems \ref{song1} and \ref{song2} to achieve $4\sqrt[13]{\log^2 n/n}<\nu\leq\tau\leq\gamma/16<1/16$.
\subsection{Absorbing and covering lemmas}
In this section, we primarily adopt the definitions established in \cite{K3}.
For a digraph $D=(V, A)$, let $\{v_0, v_1,\ldots, v_{2k}, v_{2k}^{*}, v_{2k-1}^{*}, \ldots, v_1^{*}, v_0^{*}\}\subseteq V$. An \emph{alternating path} from $v_0$ to $v_{0}^{*}$, written as  $P[v_{0}v_{1}\cdots v_{2k}v_{2k}^{*}v_{2k-1}^{*}\dots v_{0}^{*}]$, is a subdigraph of $D$ with the vertex set $\{v_0, v_1,\ldots, v_{2k}, v_{2k}^{*}, v_{2k-1}^{*}, \ldots, v_1^{*}, v_0^{*}\}$ and the arc set $\{v_iv_{i+1}, v_{i+1}^{*}v_{i}^{*}|i=0, 2, \ldots, 2k-1\}\cup\{v_{2k}v_{2k}^{*}\}\cup\{v_{j+1}v_{j}, v_{j}^{*}v_{j+1}^{*}|j=1, 3, \ldots, 2k-1\}$. Furthermore, Lo and Patel gave the following definition.
\begin{figure}[H]
\centering
\scriptsize
\includegraphics[width=9.5cm]{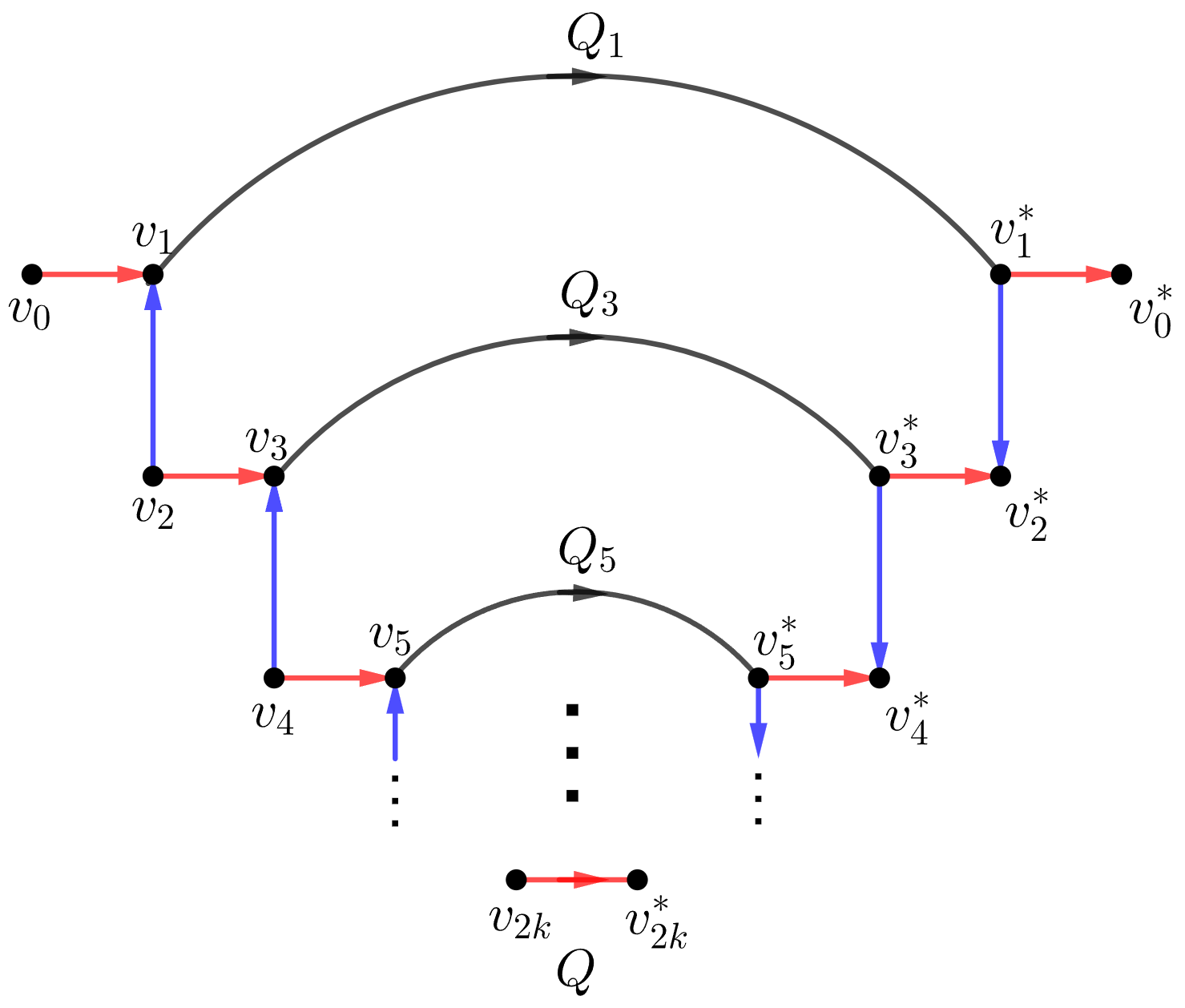}
\caption{A ladder $L=Q\cup Q_1\cup Q_3\cup\cdots\cup Q_{2k-1}$.}
\label{fig1}
\vspace{-0.5em}
\end{figure}
\begin{figure}[H]
\centering
\scriptsize
\includegraphics[width=9.5cm]{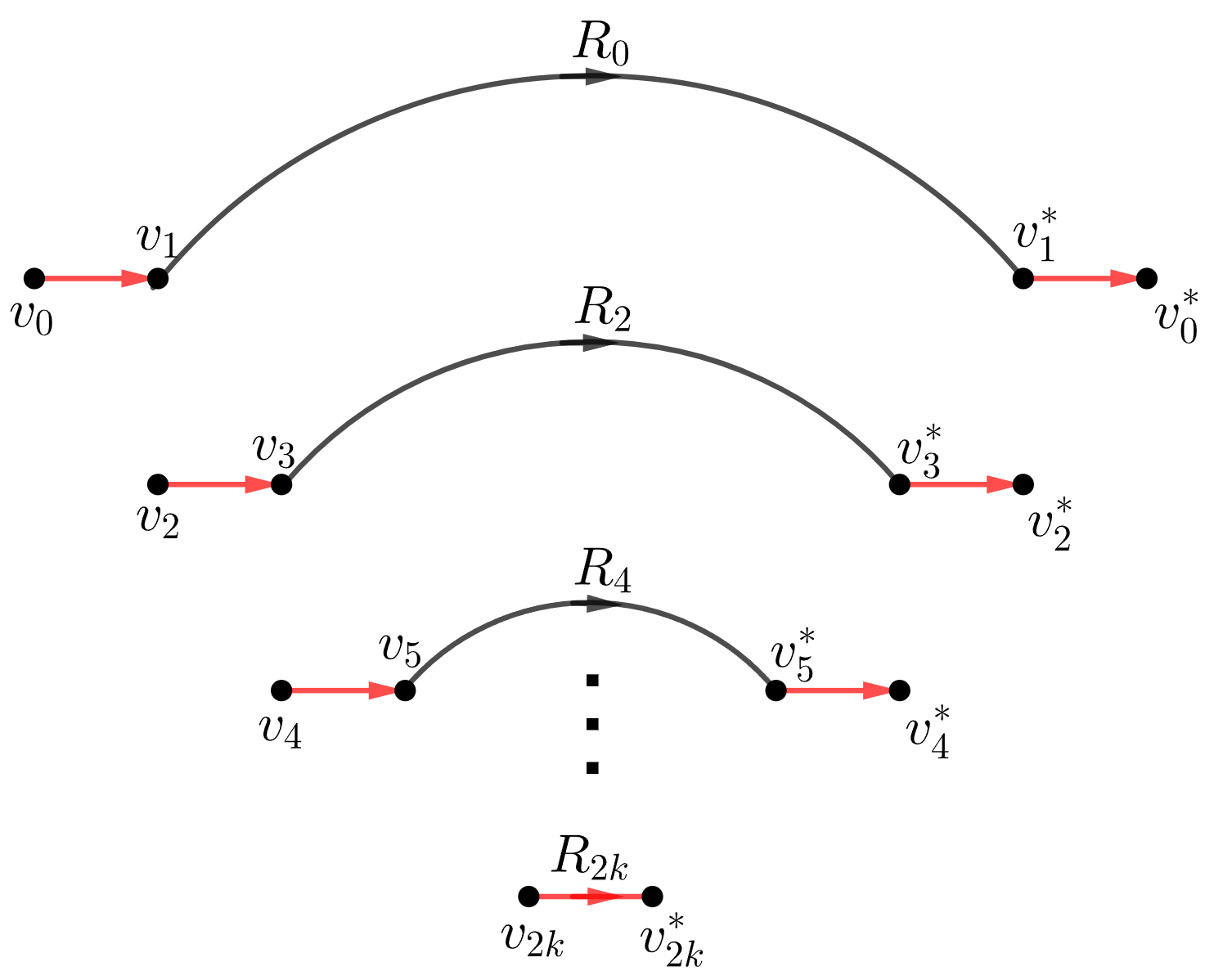}\\
\caption{Rung paths of $L$.}
\label{fig2}
\vspace{-0.5em}
\end{figure}
\begin{definition}\cite{K3}
\rm Let $D$ be a digraph and let $\{v_0, v_0^{*}\}\subseteq V(D)$. A \emph{ladder} $L$ from $v_0$ to $v_0^{*}$,  written as $L_{v_0, v_0^{*}}$, is a subdigraph of $D$ defined as (see Figure \ref{fig1})
\begin{equation*}
\begin{aligned}
L=Q\cup Q_{1}\cup Q_{3}\cup Q_{5}\cup\dots\cup Q_{2k-1},
\end{aligned}
\end{equation*}
where

(i) $Q=[v_{0}v_{1}\cdots v_{2k}v_{2k}^{*}v_{2k-1}^{*}\dots v_{0}^{*}]$ is an
alternating path of $L$.

(ii) each $Q_i=v_i\cdots v_i^{*}$ ($i=1, 3, \ldots, 2k-1$) is a path pairwise disjoint from other $Q_j$ ($j\neq i$), and internally disjoint from $Q$. 
\noindent Additionally, we define two classes of paths of $L$:

$\bullet$ \emph{rung paths} (see Figure \ref{fig2}): For $i=0, 2, 4, \ldots, 2k-2$, define $R_i\subseteq L$ as $R_i=v_iv_{i+1}Q_{i+1}v_{i+1}^{*}v_i^{*}$ and $R_{2k}=v_{2k}v_{2k}^{*}$, and call these the \emph{rung paths} of $L$.

$\bullet$ \emph{alternative rung paths} (see Figure \ref{fig3}): For $i=2, 4,\ldots, 2k$, define $R_{i}^\prime\subseteq L$ as  $R_{i}^\prime=v_{i}v_{i-1}Q_{i-1}v_{i+1}^{*}v_{i}^{*}$. We call these the \emph{alternative rung paths} of $L$.
\end{definition}

\begin{figure}[H]
\centering
\scriptsize
\includegraphics[width=9.5cm]{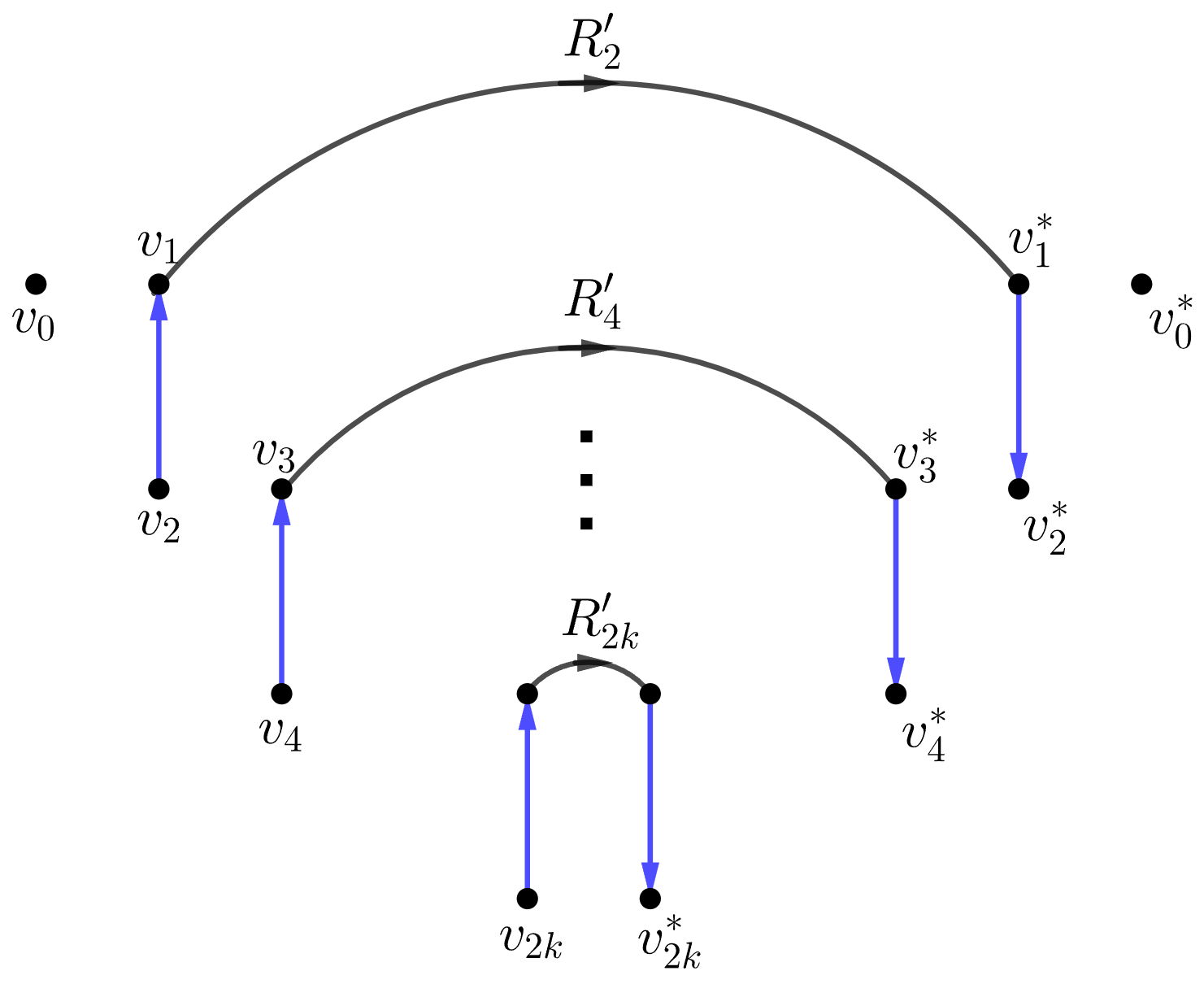}\\
\caption{Alternative rung paths of $L$.}
\label{fig3}
\vspace{-0.5em}
\end{figure}
We say a ladder $L$ is \emph{embedded} in an $H$-subdivision $H^\prime$ if all even-indexed rungs $R_i$ ($i\in\{0, 2, 4,\ldots, 2k\}$) are subpaths of some single subdivided path of $H^\prime$. Immediately after that, we present the following result.
\begin{lemma}\label{lemma2}
Let $H$ and $D$ be digraphs, and let $\{v_{0}, v_{0}^{*}\}\subset V(D)$. Consider a ladder $L_{v_0, v_{0}^{*}}\subseteq D$ embedded in an $H$-subdivision $H^\prime\subseteq D$. Then for any path $P=v_0\cdots v_{0}^{*}$ in $D$ that is internally disjoint from $H^\prime$, there exists an $H$-subdivision $H^{\prime\prime}\subseteq D$ that has the same branch-vertices as $H^\prime$ and satisfies $V(H^{\prime})\cup V(P)=V(H^{\prime\prime})$.
\end{lemma}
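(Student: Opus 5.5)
The plan is to reroute the single subdivided path of $H^\prime$ that carries the ladder so that it absorbs $P$, while every other subdivided path is left unchanged. Since $L$ is embedded in $H^\prime$, all even-indexed rungs $R_0, R_2, \ldots, R_{2k}$ are subpaths of one subdivided path $g(uv)$ of $H^\prime$. Because these rungs are pairwise disjoint, and $g(uv)$ is a path, they occur along $g(uv)$ in some order. I will assume the natural order $R_{0}, R_{2}, \ldots, R_{2k}$, with connecting segments $S_0, S_1, \ldots, S_{k-1}$ of $g(uv)$, where $S_j$ runs from $v_{2j}^{*}$ (the end of $R_{2j}$) to $v_{2j+2}$ (the start of $R_{2j+2}$). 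The orientation of the alternating path forces $R_{2j}$ to run from $v_{2j}$ to $v_{2j}^{*}$, which is consistent with this. If the order along $g(uv)$ is different, I will relabel the ladder accordingly; this is the point I expect needs the most care.

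So $g(uv)$ has the form
\[
u\cdots v_0 \,R_0\, v_0^{*}\, S_0\, v_2\, R_2\, v_2^{*}\, S_1 \cdots v_{2k}\,R_{2k}\,v_{2k}^{*}\cdots v.
\]
I then replace the rungs as follows. Traverse $P$ from $v_0$ to $v_0^{*}$, so the walk enters $v_0$ from the prefix and leaves $v_0^{*}$ into $S_0$. The old rung $R_0$ used the vertices $v_1$, $Q_1$ and $v_1^{*}$, and these must now be covered. For this I use the alternative rung $R_2^\prime = v_2 v_1 Q_1 v_1^{*} v_2^{*}$ in place of $R_2$, which picks up exactly the vertices of $Q_1$ together with $v_1, v_1^{*}$. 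That in turn frees the interior of $R_2$, namely $v_3$, $Q_3$ and $v_3^{*}$, which are covered by using $R_4^\prime$ in place of $R_4$. I continue in this way up to $R_{2k}^\prime = v_{2k} v_{2k-1} Q_{2k-1} v_{2k-1}^{*} v_{2k}^{*}$, which replaces $R_{2k}=v_{2k}v_{2k}^{*}$; the latter had no interior, so the cascade closes.

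It remains to verify that the result is still an $H$-subdivision with the claimed vertex set. I check that each replacement path has the same endpoints ($v_{2j}$ to $v_{2j}^{*}$) as the rung it replaces, so the concatenation is a walk from $u$ to $v$. Its vertex set is $V(g(uv))\cup V(P)$: the interior of $R_0$ is moved into $R_2^\prime$, the interior of each $R_{2j}$ is moved into $R_{2j+2}^\prime$, and the interior of $P$ is new. The walk is a path because the interior of $P$ is disjoint from $H^\prime$, and each $Q_{2j-1}$ is used exactly once. The other subdivided paths and the branch vertices are untouched, so internal disjointness is preserved and $V(H^{\prime\prime})=V(H^\prime)\cup V(P)$. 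The main obstacle is the bookkeeping that the rungs sit along $g(uv)$ in an order compatible with this shifting; once that is settled, the rest is direct.
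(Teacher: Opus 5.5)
Your proposal is correct and is essentially the paper's argument: replace $R_0$ by $P$, then cascade $R_{2j}\mapsto R_{2j}^\prime$ for $j=1,\ldots,k$, with each alternative rung reusing the interior $Q_{2j-1}$ freed by the previous swap (the paper phrases this as an induction $H_{-2}=H^\prime, H_0, \ldots, H_{2k}$, checking after each swap that it still has an $H$-subdivision). The ordering issue you flag is not an obstacle: each swap replaces a subpath by a path with the same ends whose interior is disjoint from the current subdivision, and your final verification never uses the order of the rungs along $g(uv)$, so no relabelling is needed (and the ladder's structure would not generally permit one, since $R_0$ is forced to be the rung through $v_0$).
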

\begin{proof}
Let $L$ be the embedded ladder with alternating path $Q=[v_{0}v_{1}\cdots v_{2k}v_{2k}^{*}v_{2k-1}^{*}\dots v_{0}^{*}]$ and disjoint paths $Q_{i}=v_i\cdots v_i^{*}$ for odd indices $i\in\{1, 3, \ldots, 2k-1\}$. Let $R_{0}, R_{2}, \ldots, R_{2k-2}$ represent
the rung paths of $L$, and $R^\prime_{2}, R^\prime_{4}, \ldots, R^\prime_{2k}$ be the alternative rung
paths of $L$. Set $R^\prime_{0}:=P$.

\smallskip

We construct the target $H$-subdivision $H^{\prime\prime}$ through an inductive replacement process. Beginning with $H_{-2}=H^\prime$, at each step $i\in2\mathbb{N}$, we replace rung $R_i$ in $H_{i-2}$ with $R_i^\prime$. Specifically, suppose that we have obtained the $H$-subdivision $H_{i-2}$. Note that in $H_{i-2}$, the paths $R^\prime_{0}, \ldots , R^\prime_{i-2}$, $R_{i}, \ldots , R_{2k}$ are disjoint, and the interior $\mathring{R^\prime_{i}}=Q_{i-1}$ is disjoint from $H_{i-2}$. We obtain $H_i$ by removing $R_i$ from $H_{i-2}$ and replacing it with $R_i^\prime$. Since $R_{i}$ and $R^\prime_{i}$ are internally disjoint and share the same end-vertices, $H_i$ remains an $H$-subdivision. Furthermore, $H_i$ now contains $R_0^\prime, \ldots, R_i^\prime, R_{i+2}, \ldots, R_{2k}$. Also, since $\mathring{R}^\prime_{i+2}
=\mathring{R_{i}}=Q_{i+1}$ is disjoint from $H_{i}$, the path $\mathring{R}^\prime_{i+2}$ is disjoint from $H_{i}$. Thus, continuing this inductive process until $i=2k$, the final $H$-subdivision $H^{\prime\prime}=H_{2k}$ is obtained, containing all modified rung paths $R^\prime_{0}, \ldots, R^\prime_{2k}$.

\smallskip

The construction immediately establishes that $P=R^\prime_{0}$ is contained in $H^{\prime\prime}$, while the vertex set $V(L)=V(\bigcup_{\substack{i=2
\\i\;even}}^{t}R^\prime_{i})=V(\bigcup_{\substack{i=0\\i\;{even}}}^{t}R_{i})\subseteq V(H^{\prime\prime})$. Crucially, for any path $P^\prime\subseteq H^\prime$ disjoint from $L$, the inductive process preserves $P^\prime$ at each step ($P^\prime\subseteq H_{i-2}\Rightarrow P^\prime\subseteq H_{i}$), and so $P^\prime\subseteq H^{\prime\prime}=H_{2k}$. Moreover, the branch-vertices of $H^{\prime\prime}$ coincide precisely with those of the initial $H$-subdivision $H^\prime$. Finally, by the induction hypothesis, for every vertex $x\in V(D-P-L)$, if
$x\notin H_{i-2}$ then $x\notin H_{i}$. Hence, this completes the proof, verifying all necessary properties of this $H$-subdivision $H^{\prime\prime}$.
\end{proof}
By Lemma \ref{lemma2}, the following corollary can be readily obtained by taking $H$ to be a path in the lemma. This corollary provides a straightforward method for merging paths while preserving all vertices and end-vertices. We will employ this path-merging technique to connect branch-vertices in Lemma \ref{coro1} and subsequently to construct perfect $H$-subdivision tilings in Theorem \ref{szc2}.
\begin{corollary}\label{coroy1}
Let $D$ be a digraph, and let $\{v_{0}, v_{0}^{*}\}\subset V(D)$. Consider a path of $D$ with a ladder $L_{v_{0}, v_{0}^{*}}\subseteq D$ being embedded in $P$. Then for any path $P^\prime=v_0\cdots v_{0}^{*}$ in $D$ that is internally disjoint from $P$, there exists a path $P^{\prime\prime}$ in $D$ that has the same initial and terminal as $P$ and satisfies $V(P)\cup V(P^\prime)=V(P^{\prime\prime})$.
\end{corollary}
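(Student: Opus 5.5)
The plan is to obtain Corollary \ref{coroy1} as the special case of Lemma \ref{lemma2} in which $H$ is a single arc. Let $H=\overrightarrow{K}_2$ with vertex set $\{a,b\}$ and arc set $\{ab\}$. Let $x$ and $y$ be the initial and terminal vertices of the path $P$. Then $P$ itself is an $H$-subdivision $H^\prime$ in $D$: set $f(a)=x$, $f(b)=y$ and $g(ab)=P$. Condition (a) holds because $x\neq y$, which is automatic here since $P$ contains the ladder and hence at least the two distinct vertices $v_0,v_0^*$. Condition (b) is trivial, because there is only one arc.

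Next we check the hypotheses of Lemma \ref{lemma2}. The phrase ``the ladder $L_{v_0,v_0^*}$ is embedded in $P$'' means that all even-indexed rungs $R_0,R_2,\ldots,R_{2k}$ are subpaths of $P$. Since $P$ is the unique subdivided path of $H^\prime$, this is exactly the statement that $L$ is embedded in $H^\prime$. The path $P^\prime=v_0\cdots v_0^*$ is internally disjoint from $P=H^\prime$ by assumption. So Lemma \ref{lemma2} applies and gives an $H$-subdivision $H^{\prime\prime}$ with the same branch-vertices $x,y$ and with $V(H^{\prime\prime})=V(P)\cup V(P^\prime)$.

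Finally we translate the conclusion back. An $\overrightarrow{K}_2$-subdivision with branch vertices $f(a)=x$ and $f(b)=y$ is just the single path $g(ab)$ from $x$ to $y$. Hence $P^{\prime\prime}:=g(ab)$ is a path in $D$ with the same initial and terminal vertices as $P$ and with $V(P^{\prime\prime})=V(P)\cup V(P^\prime)$, as required.

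The only point needing care is that Lemma \ref{lemma2} preserves the branch-vertex images as labelled, so that the orientation from $x$ to $y$ is kept. This follows from its proof, since each step swaps a rung $R_i$ for $R_i^\prime$ with the same ends and the same direction. There is no real obstacle.
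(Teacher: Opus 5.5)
Your proposal is correct and is essentially the paper's own argument: the paper obtains Corollary \ref{coroy1} by specializing Lemma \ref{lemma2} to the case where $H$ is a path. Your choice of a single arc, with $P$ as the unique subdivided path, is exactly that specialization written out. Your remark that branch-vertices, and hence the initial and terminal vertices, are preserved with their orientation is the one detail the paper leaves implicit.
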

Before presenting our absorbing structure, we need to introduce the following definitions. Given a digraph $D$ and distinct vertices $x,y,u,v\in V(D)$, we say that the
ordered pair $(u,v)\in [V(D)]^{2}$ \emph{covers} $(x,y)\in [V(D)]^{2}$ if $ux,yv\in A(D)$. Furthermore, for a subset $K\subseteq [V(D)]^{2}$, we say that $K$ \emph{$d$-covers} $V(D)$ if for every $(x,y)\in [V(D)]^{2}$, there exist $d$ distinct elements of $K$, each covering $(x, y)$. We now define the absorbing structure.
\begin{definition}
\rm Given a digraph $D$ and $d\in \mathbb{N}$, we say that a triple $(K,\mathcal{L}, \mathcal{H})$ is a \emph{Type-I $d$-absorber $A_I$} of $D$ if it satisfies the following conditions $(1)$, $(2)$ and $(3)$. Similarly, it is referred to as a \emph{Type-II $d$-absorber $A_{II}$} of $D$ provided that it fulfills the following conditions $(1)$, $(2)$ and $(4)$.

\smallskip

$(1)$ $K\subseteq [V(D)]^{2}$ is a set of disjoint ordered pairs that $d$-covers $V(D)$.
\smallskip

$(2)$ $\mathcal{L}$ is a set of disjoint ladders such that
for each pair $(u,v)\in K$, there exists a ladder
$L\in\mathcal{L}$ from $u$ to $v$.
\smallskip

$(3)$ $\mathcal{H}$ is an $H$-linked subdigraph of $D$ in which each ladder $L\in\mathcal{L}$ is embedded and each subdivided path of $\mathcal{H}$ contains at least one embedded ladder of $\mathcal{L}$.
\smallskip

$(4)$ $\mathcal{H}$ is a collection of disjoint $H$-subdivisions such that all ladders in $\mathcal{L}$ are embedded into $\mathcal{H}$, and there is at least one embedded ladder in every $H$-subdivision of $\mathcal{H}$. 
\end{definition}

To establish the absorbing lemma, we also need two key lemmas concerning robust expanders and vertex coverage in digraphs. The first lemma guarantees the existence of disjoint ladders connecting prescribed vertex pairs in a robust expander, while the second provides a covering system for vertex subsets through disjoint pairs. These combinatorial tools will play crucial roles in our subsequent construction.
\begin{lemma}\label{lemma4}\cite{K3}
Let $0<\nu\leq\tau\leq\gamma/16<1/16$, and let $n, l\in\mathbb{N}$ with $n\geq460l\nu^{-3}$. Suppose $D$ is an $n$-vertex robust $(\nu,\tau)$-expander with $\delta^{0}(D)\geq\gamma n$. Then for any set of $2l$ distinct vertices $\{u_{1},\ldots,u_{l},v_{1}, \ldots,v_{l}\}\subseteq V(D)$, there exist $l$ disjoint ladders $L_{u_1, v_1}, \ldots, L_{u_l, v_l}$ such that $|L_{u_i, v_i}|\leq 12\nu^{-2}$ for each $i\in[l]$, and the alternating path $P_{i}$ of $L_{u_i, v_i}$ satisfies $|V(P_{i})|\leq 8\nu^{-1}$.
\end{lemma}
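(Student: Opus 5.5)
The statement is Lemma~\ref{lemma4}, which the paper quotes from \cite{K3}; I would prove it by constructing the ladders greedily, one at a time. Suppose $L_{u_1,v_1},\dots,L_{u_{j-1},v_{j-1}}$ have been built, each with at most $12\nu^{-2}$ vertices. Let $V_0$ be the union of their vertex sets together with the remaining terminals $u_{j+1},\dots,v_l$. Then $|V_0|\le 12l\nu^{-2}+2l\le \nu n/100$, because $n\ge 460l\nu^{-3}$.

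By Proposition~\ref{prop1}, $D-V_0$ is a robust $(\nu/2,2\tau)$-outexpander, and its minimum semi-degree is at least $(\gamma-\nu/100)n$. By Proposition~\ref{prop2} it is also a robust inexpander with parameters polynomial in $\nu$. (Alternatively, I would apply the same removal argument directly to the in-expansion of $D$, since $D$ is assumed to be a robust expander.) So it suffices to build a single ladder from $u$ to $v$, with the stated size bounds, inside a robust expander with linear minimum semi-degree, while also avoiding a further set of $O(\nu^{-2})$ vertices.

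\textbf{Step 1: the alternating path.} I grow alternating neighbourhood sets from both ends. Put $S_0=\{u\}$, $S_1=N^+(u)$, and then alternately $S_{2i}=RN^-(S_{2i-1})$ and $S_{2i+1}=RN^+(S_{2i})$. Symmetrically, from $v$ put $S^*_0=\{v\}$, $S^*_1=N^-(v)$, and alternately take robust out- and in-neighbourhoods.

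The minimum semi-degree gives $|S_1|,|S^*_1|\ge\gamma n>\tau n$. From then on, each robust step increases the size by a linear multiple of $\nu n$ until the size exceeds $(1-2\tau)n$. Once a set has size above $(1-2\tau)n$, every vertex has at least $(\gamma-2\tau)n\ge\nu n$ neighbours of the required direction inside it, so the next set is all remaining vertices. Hence after the same number $2k\le 4\nu^{-1}$ of steps on both sides, $S_{2k}$ and $S^*_{2k}$ are almost everything, and there is an arc $v_{2k}v^*_{2k}$ with $v_{2k}\in S_{2k}$ and $v^*_{2k}\in S^*_{2k}$.

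Next I trace back. Every vertex of $S_{i}$ has at least $\nu n/2$ suitable neighbours in $S_{i-1}$, and the same holds on the starred side. So I can pick $v_{2k-1},\dots,v_1$ and $v^*_{2k-1},\dots,v^*_1$ one by one, avoiding all previously chosen vertices and $V_0$. Only $O(\nu^{-1})$ vertices are ever forbidden, which is far fewer than $\nu n/2$. This yields an alternating path $Q$ with $|V(Q)|\le 4k+2\le 8\nu^{-1}$. The reason for running the forward and backward sides for an equal number of steps is that the ladder definition requires both halves of $Q$ to have the same index $k$.

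\textbf{Step 2: the paths $Q_i$.} For the $k\le 2\nu^{-1}$ odd indices $i$, I need disjoint $v_i$--$v^*_i$ paths whose interiors avoid $Q\cup V_0$. I apply Lemma~\ref{lamma1} in $D-(V_0\cup V(Q)\setminus\{v_i,v^*_i\})$, which is still a robust expander by Proposition~\ref{prop1}, using $r=k$ and halved parameters. This gives paths with $|Q_i|\le 4\nu^{-1}+1$. The total size is then at most $8\nu^{-1}+2\nu^{-1}(4\nu^{-1}+1)\le 12\nu^{-2}$ once the constants are tracked. The condition $n\ge(6r+11)\nu^{-2}$ in Lemma~\ref{lamma1} is comfortably satisfied.

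\textbf{Main obstacle.} The delicate point is Step 1. The growth of the alternating sets must be guaranteed even though, for example, $RN^-(RN^+(S))$ need not contain $S$. The back-tracing must also keep the forward and starred vertices all distinct and disjoint from $V_0$. I would handle the growth by arguing directly on sizes, as above: each step gains $\nu n$ until the size passes $(1-2\tau)n$, after which a set is absorbing. I would handle distinctness by the robustness margin of $\nu n/2$ choices at each back-step. The constants $460$ and $12$ should then come out of this bookkeeping.
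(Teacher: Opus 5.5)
Your proposal is correct. The paper gives no proof of this lemma and simply imports it from \cite{K3}. Your greedy two-stage construction is the standard argument, and its bookkeeping $8\nu^{-1}+2\nu^{-1}(4\nu^{-1}+1)\le 12\nu^{-2}$ is consistent with the constants stated there. The two stages are: first grow alternating robust in-/out-neighbourhoods from $u$ and from $v$ and trace back to get the alternating path; then supply the odd-indexed paths $Q_i$ via Lemma~\ref{lamma1} with halved parameters.

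Only small repairs are needed:
\begin{enumerate}
\item $n\ge 460l\nu^{-3}$ only gives $\nu n/100\ge 4.6\,l\nu^{-2}$, so your claim $|V_0|\le \nu n/100$ is false. However, $|V_0|+|V(Q)|\le 22l\nu^{-2}\le \nu n/4$ holds, and that is all Proposition~\ref{prop1} needs; correspondingly $\delta^0(D-V_0)\ge(\gamma-\nu/4)n$.
\item For in-expansion you must use the direct removal argument, not Proposition~\ref{prop2}. An in-expansion parameter of order $\nu^{2}$ would make each path from Lemma~\ref{lamma1} of order $\nu^{-2}$, so the ladder would have order $\nu^{-3}$ and the $12\nu^{-2}$ bound would fail.
\item Your count $2k\le 4\nu^{-1}$ only yields $4k+2\le 8\nu^{-1}+2$. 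In fact $\nu/2$-expansion starting from $|S_1|\ge(\gamma-\nu/4)n$ gives $2k\le 2\nu^{-1}+O(1)$, hence $|V(Q)|\le 8\nu^{-1}$ and $k\le 2\nu^{-1}$.
\end{enumerate}
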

\begin{lemma}\label{lemma3}\cite{K3}
Let $\gamma\in (0,1)$, and let $n,d\in\mathbb{N}$ satisfying $n>10^{5}d^{2}\gamma^{-4}\log^{2}(100d\gamma^{-2})$ and $d\geq 8$. Then for any $n$-vertex digraph $D$ with $\delta^{0}(D)\geq\gamma n$, and any vertex subset $U\subseteq V(D)$, there exists a disjoint collection $K\subseteq
[V(D)]^{2}$ of size $|K|=\lceil 24\gamma^{-2}(d\log(24d\gamma^{-2})+2\log n) \rceil$ that $d$-covers $U$.
\end{lemma}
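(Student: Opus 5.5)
The plan is to prove the lemma by a simple random construction, choosing the pairs of $K$ one at a time and then applying a Chernoff bound and a union bound. Set $m:=\lceil 24\gamma^{-2}(d\log(24d\gamma^{-2})+2\log n)\rceil$. The hypothesis $n>10^{5}d^{2}\gamma^{-4}\log^{2}(100d\gamma^{-2})$ should guarantee $m\le \gamma n/100$, say, since $m=O(\gamma^{-2}(d\log(d\gamma^{-2})+\log n))$ and $\log n\ll \gamma^2 n$ in this range. This smallness is the only place the lower bound on $n$ is used.

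\textbf{Construction.} Build $K=\{(u_1,v_1),\ldots,(u_m,v_m)\}$ sequentially. At step $j$, let $W_{j}$ be the set of the $2(j-1)$ vertices already used. Choose $(u_j,v_j)$ uniformly at random among ordered pairs of distinct vertices of $V(D)\setminus W_j$. This makes $K$ a set of $m$ disjoint ordered pairs by definition, so it only remains to check that $K$ $d$-covers $U$ with positive probability.

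\textbf{Coverage of a fixed pair.} Fix $(x,y)\in[U]^2$. Call step $j$ a success if $u_j\in N^-(x)$, $v_j\in N^+(y)$, and $u_j,v_j\notin\{x,y\}$, with $u_j\ne v_j$. Whatever happened in steps $1,\ldots,j-1$, the set $N^-(x)\setminus(W_j\cup\{x,y\})$ has size at least $\gamma n-2m-2\ge \gamma n/2$, and the same holds for $N^+(y)$. Hence the number of admissible ordered pairs is at least $(\gamma n/2)(\gamma n/2-1)$. Since there are at most $n^2$ choices in total, the conditional success probability at step $j$ is at least $\gamma^2/5=:p$.

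Consequently, the number $X_{x,y}$ of successful steps stochastically dominates a binomial random variable $\mathrm{Bin}(m,p)$. The standard coupling works because the conditional probabilities are uniformly bounded below. Each success yields a distinct element of $K$ covering $(x,y)$.

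\textbf{Chernoff and union bound.} The mean satisfies $\mu:=mp\ge \tfrac{24}{5}(d\log(24d\gamma^{-2})+2\log n)$. In particular $\mu\ge 4d$, because $\log(24d\gamma^{-2})\ge\log 192>1$ when $d\ge 8$. So $d\le \mu/4$, and the Chernoff bound gives
\[
\Pr\big(X_{x,y}<d\big)\le \Pr\big(\mathrm{Bin}(m,p)\le \mu/4\big)\le e^{-(3/4)^2\mu/2}.
\]
Here $(9/32)\cdot\tfrac{24}{5}\cdot 2\log n>2.5\log n$, so this probability is at most $n^{-5/2}$. A union bound over the fewer than $n^2$ ordered pairs $(x,y)$ then gives total failure probability below $n^{-1/2}<1$. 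Hence some outcome of the process is a $K$ that $d$-covers $U$. The term $d\log(24d\gamma^{-2})$ is only slack that keeps $\mu$ comfortably above $d$; the $2\log n$ term is what pays for the union bound.

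\textbf{Main obstacle.} The main care is in the dependence between steps, since later pairs avoid the vertices of earlier ones. I handle this by the uniform conditional lower bound $p$ together with domination by a binomial (or, equivalently, an Azuma-type supermartingale argument), and by checking that $2m+2\le\gamma n/2$ follows from the stated lower bound on $n$. The remaining constants only need routine checking.
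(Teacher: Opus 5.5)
Your argument is correct. There is nothing in this paper to compare it against: the lemma is quoted from Lo--Patel \cite{K3} without proof, so the relevant comparison is with the alternative route the hypothesis suggests.

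Sampling the pairs sequentially \emph{without replacement} makes $K$ disjoint automatically. The only cost is the dependence between steps. You handle it correctly, via the uniform conditional bound $\gamma^2/4-\gamma/(2n)\ge\gamma^2/5$ (valid once $\gamma n\ge 10$) and stochastic domination by $\mathrm{Bin}(m,\gamma^2/5)$. The remaining steps also check out:
\begin{itemize}
\item $\mu\ge 4d$ holds.
\item The Chernoff exponent satisfies $\tfrac{9}{32}\cdot\tfrac{24}{5}\cdot 2=2.7>2$, so the union bound closes.
\item The reduction $m\le\gamma n/100$ does follow from the hypothesis. Since $n/\log n$ is increasing, it suffices to check the threshold value of $n$, where $\log n\le 3\log(100d\gamma^{-2})$.
\end{itemize}

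Your proof only needs $2m+2\le\gamma n/2$, i.e.\ roughly $n\gtrsim\gamma^{-3}(d\log(d\gamma^{-2})+\log n)$. The stated hypothesis $n>10^5d^2\gamma^{-4}\log^2(100d\gamma^{-2})$ is essentially $n\gg m^2$. That quadratic shape is what one needs if the $m$ pairs are instead drawn independently with replacement: each coverage count is then exactly binomial, so no coupling is needed, and disjointness comes from a birthday-type bound $\Pr(\text{collision})=O(m^2/n)$.

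So your route trades an exact binomial for a routine domination argument, and in return proves the lemma under a weaker lower bound on $n$. It also $d$-covers all of $[V(D)]^2$, so $U$ plays no role. In your argument the term $d\log(24d\gamma^{-2})$ serves only to guarantee $\mu\ge 4d$.
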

Based on Lemmas \ref{lemma4} and \ref{lemma3}, we now establish the existence of a compact $d$-absorber in a robust expander with a small semi-degree. We first apply Lemma \ref{lemma3} to $D$ to obtain a $d$-covering family $K$ and then for each pair
$(a,b)\in K$, we use Lemma \ref{lemma4} to construct a compact ladder from $a$ to $b$ (with a small number of vertices). Next, using Lemma \ref{lamma1}, we efficiently find the (disjoint) $H$-linked subdigraph(s) in which all these ladders are appropriately and efficiently embedded. This forms the final structure that exhibits robust absorption capability due to the hierarchically structured ladder embedding, ensuring adaptability for future absorption tasks.
\begin{lemma}\label{thm3}
Let $0<\nu\leq\tau\leq\gamma/16<1/16$ and $n,d\in\mathbb{N}$, with $d\geq 8$ and
\begin{equation*}
\begin{aligned}
n>\max\{10^{4}d^{2}\gamma^{-5}\log^{2}(100d\gamma^{-2}),10^{5}d\gamma^{-2}
\nu^{-3}\log(1500d\gamma^{-2}\nu^{-1})\}.
\end{aligned}
\end{equation*}
Suppose $D$ is an $n$-vertex robust $(\nu,\tau)$-expander with $\delta^{0}
(D)\geq\gamma n$. Then $D$ simultaneously contains both Type-I and Type-II $d$-absorbers with vertex set size bounded by $1600\nu^{-2}\gamma^{-2}(d\log(d\gamma^{-2})+\log n)+3\nu^{-2}d.$
\end{lemma}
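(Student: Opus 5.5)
We build both absorbers in three stages: a covering family of pairs, a ladder for each pair, and an $H$-structure in which the ladders are embedded.

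\textbf{Covering family.} Apply Lemma \ref{lemma3} with $U=V(D)$. This gives a disjoint collection $K\subseteq[V(D)]^2$ that $d$-covers $V(D)$, with
\[
|K|=\lceil 24\gamma^{-2}(d\log(24d\gamma^{-2})+2\log n)\rceil .
\]
The first term in the lower bound on $n$ is exactly what this application requires. Write $K=\{(u_1,v_1),\dots,(u_l,v_l)\}$ with $l=|K|$.

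\textbf{Ladders.} Apply Lemma \ref{lemma4} to the $2l$ distinct vertices $u_i,v_i$. This gives disjoint ladders $L_{u_i,v_i}$ with $|L_{u_i,v_i}|\le 12\nu^{-2}$, which is condition $(2)$. For this we need $n\ge 460l\nu^{-3}$. Since $l=O(\gamma^{-2}(d\log(d\gamma^{-2})+\log n))$, this holds by the second term of the hypothesis on $n$, once we check that $\log n$ is dominated by $n\nu^{3}\gamma^{2}/10^5$. This is a routine comparison of $n$ with $\log n$. The ladders together use at most $12\nu^{-2}l\le 1600\nu^{-2}\gamma^{-2}(d\log(d\gamma^{-2})+\log n)$ vertices, which matches the first summand of the claimed bound.

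\textbf{The $H$-structure.} Order the ladders $L_1,\dots,L_l$. Each ladder is traversed along its even rungs $R_0,R_2,\dots,R_{2k}$. The rung $R_i$ ends at $v_i^*$, and $v_{i+2}$ is the start of the next rung. So we join $v_i^*$ to $v_{i+2}$, and join the last rung of $L_j$ to the first rung of $L_{j+1}$, by short paths. These paths come from Lemma \ref{lamma1}, applied in $D'=D-V(\bigcup_j L_j)$ with the needed endpoints re-added.

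\emph{Type-II absorber.} Take $|V(H)|$ fresh vertices for the branch vertices of one $H$-subdivision. Route one subdivided arc through all the ladders as above; every other arc becomes a short path from Lemma \ref{lamma1}. This gives condition $(4)$, with one subdivision and all ladders embedded in a single subdivided path.

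\emph{Type-I absorber.} Condition $(3)$ needs $\mathcal H$ to be $H$-linked and every subdivided path to contain a ladder. Split the ladders into $h$ nonempty groups; we need $l\ge h$, which holds since $l$ grows with $d$ and $\log n$. Thread group $t$ into the $t$-th subdivided arc. For linkedness, interpret the Type-I $\mathcal H$ as a subdigraph that, for any placement of branch vertices, still routes through the ladders. The $H$-linked requirement is then met because $\mathcal H$ is taken to be $D[V(\mathcal H)]$ together with short connectors supplied by Lemma \ref{lamma1} applied to the remaining expander. Concretely, I would keep $\mathcal H$ as the union of the threaded ladders plus a reserved robust-expanding connector set of size $3\nu^{-2}d$. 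This connector set accounts for the second summand of the bound.

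\textbf{Robust expansion after deletions.} Lemma \ref{lamma1} must be applied in $D$ minus the already-used vertices, so we need that digraph to remain a robust expander. The total number of removed vertices is $O(\nu^{-2}\gamma^{-2}(d+\log n))\le\nu n/4$ by the bound on $n$. Proposition \ref{prop1} then shows robust outexpansion is preserved with parameters $(\nu/2,2\tau)$. Proposition \ref{prop2} converts this to inexpansion (the conditions $2\tau<\gamma$ and $\nu^2/2<\tau\gamma$ follow from $\tau\le\gamma/16$). The minimum semi-degree stays at least $\gamma n/2$. The path-length bound $2\nu^{-1}+1$ keeps the connectors' total size within $3\nu^{-2}d$, using that the number of connections is $O(l)$.

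\textbf{Main obstacle.} The hardest step is the precise meaning and verification of "$H$-linked" in condition $(3)$ while keeping each subdivided path through at least one ladder. I would handle it by threading the ladders so that any choice of branch vertices can be linked to the ladder chains via Lemma \ref{lamma1}.
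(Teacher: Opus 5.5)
Your covering family, ladders and rung-by-rung threading match the paper's proof. The genuine gap is the Type-I step, which you yourself flag as the main obstacle. You read ``$H$-linked subdigraph'' in the strong sense, meaning that every placement of branch vertices in $V(\mathcal H)$ extends to an $H$-subdivision. You then assert this holds via ``a reserved robust-expanding connector set of size $3\nu^{-2}d$'', and that does not work. Lemma \ref{lamma1} only produces connectors inside a host digraph of order at least $(6r+11)\nu^{-2}$ that is itself a robust expander with linear minimum semi-degree. So no set of size $O(\nu^{-2}d)$ can act as such a reservoir, and connectors found later in $D-V(A)$ are not part of $\mathcal H$. Strong $H$-linkedness would also force every vertex of $\mathcal H$ to have out-degree in $\mathcal H$ at least the maximum out-degree of $H$. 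Nothing in your construction guarantees this: on a set of size $O(\log n)$ the hypothesis $\delta^0(D)\ge\gamma n$ gives no degree control, even for $D[V(\mathcal H)]$. Moreover, a branch vertex placed inside a ladder destroys that ladder.

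The paper never needs this. In its proof the branch vertices are the prescribed images $f(V(H))$, relabelled as tails and heads $v_t,v_t'$ of the $h$ arcs. The ladders are split into $h$ groups and the rung interiors are deleted. One application of Lemma \ref{lamma1} then gives connectors from $v_t$ to the first rung of group $t$, between consecutive rungs and consecutive ladders, and from the last rung of group $t$ to $v_t'$. ``$H$-linked subdigraph'' there just means this $H$-subdivision extending $f$, which is all that Lemma \ref{coro1}(i) and the proof of Theorem \ref{song1} use. So your Type-I construction should simply be your Type-II threading with $h$ groups and the given branch vertices, not a universal object.

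Two further points need fixing. First, your size accounting is wrong. Each ladder has up to about $2\nu^{-1}$ even rungs, so there can be order $\nu^{-1}|K|$ connectors of up to $2\nu^{-1}+1$ vertices each, i.e.\ order $\nu^{-2}|K|$ vertices. That bound grows with $\log n$ and cannot be charged to the $n$-independent term $3\nu^{-2}d$. The paper bounds ladders and connectors together by $32\nu^{-2}|K|$ and only then expands $|K|$, so both land in the first summand. Second, Proposition \ref{prop2} is unnecessary and costly. $D$ is assumed to be a robust $(\nu,\tau)$-expander, so Proposition \ref{prop1} applied to $D$ and to its reverse already makes $D$ minus the rung interiors a robust $(\nu/2,2\tau)$-expander, to which Lemma \ref{lamma1} applies. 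Routing through Proposition \ref{prop2} replaces $\nu$ by $\nu^2/4$. Lemma \ref{lamma1} then needs $n\ge16(6r+11)\nu^{-4}$ with $r\ge|K|$, which the stated lower bound on $n$ does not guarantee. Finally, your single-subdivision Type-II absorber does satisfy condition (4) as literally written. The paper instead builds one subdivision per tile, with branch sets $f_i(V(H))$ and ladder groups $S_i$ satisfying $|V(S_i)|<n_i$, because that is the form the proof of Theorem \ref{song2} consumes.
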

\begin{proof}
We begin by applying Lemma \ref{lemma3} to $D$ to obtain a $d$-covering family $K$ of $m=\lceil24\gamma^{-2}(d\log(24d\gamma^{-2})+2\log n)\rceil$ disjoint vertex pairs.
Furthermore, for each pair $(a, b)\in K$, Lemma \ref{lemma4} yields a ladder $L_{a, b}$ with at most $12\nu^{-2}$ vertices and alternating paths of length at most $8\nu^{-1}$, where all $m$ ladders remain pairwise disjoint. Let $\mathcal{L}=\{L_1, \ldots, L_m\}$ denote the family of pairwise disjoint ladders we have constructed, and for each $i\in[m]$, let $R_{i, 1}, \ldots, R_{i, r_i}$ represent the collection of all rung paths in the ladder $L_i$. Consequently, let $s=\sum_{i=1}^mr_i$ and then we obtain the upper bound $s\leq4\nu^{-1}m$. Let $x_{i, j}$ and $y_{i, j}$ be the initial and the terminal of $R_{i, j}$, respectively, where $i\in[m]$ and $j\in[r_i]$. In the following, we construct a Type-I $d$-absorber $A_I$ in \textbf{(1)} and
a Type-II $d$-absorber $A_{II}$ in \textbf{(2)}.

\smallskip

\textbf{(1)} To get a Type-I $d$-absorber $A_I$, we do the following. For convenience, we relabel the vertices in $f(V(H))$ as $f(V(H))=\cup_{i=1}^h\{v_i, v_i^\prime\}$, ensuring that in the desired $H$-linked subdigraph, the path from $v_i$ to $v_i^\prime$ has length exactly $l_i$. Then we randomly partition these $m$ ladders into $h$ subsets $S_1, \ldots, S_h$ such that the total number of vertices in each subset $S_i$ is less than $l_i$ for each $i\in[h]$. Without loss of generality, for each $i\in[m]$, we assume that $S_i=\{L_{m_{i-1}+1}, \ldots, L_{m_{i}}\}$, where $m_0=0$ and $m=\sum_{i=1}^hm_i$. In the following, we construct a new digraph $D^\prime$ by deleting all internal vertices of the paths $R_{1, 1}, \ldots, R_{1, r_1}, \ldots, R_{m, 1}, \ldots, R_{m, r_m}$ from $D$. Since at most $12\nu^{-2}m\leq\nu/2$ vertices are removed, Proposition \ref{prop1} guarantees that $D^\prime$ remains a $(\frac{1}{2}\nu, \frac{32}{31}\tau)$-expander. Given our choice of parameters $\nu, \tau, \gamma$ and $n$, we apply Lemma \ref{lamma1} with $\nu, \tau, \gamma$ replaced by $\frac{1}{2}\nu, \frac{32}{31}\tau, \frac{31}{32}\gamma$, respectively, which allows us to find:

\smallskip

$(i)$ disjoint paths from $y_{m_{i-1}+l, j}$ to $x_{m_{i-1}+l, j+1}$ for all $i\in[h], l\in[m_i]$ and $j\in[r_{m_{i-1}+l}-1]$, and disjoint paths from $y_{m_{i-1}+l, r_{m_{i-1}+l}}$ to $x_{m_{i-1}+l+1, 1}$ for all $i\in[h]$ and $l\in[m_i-m_{i-1}]$;

\smallskip

$(ii)$ disjoint paths from $v_i$ to $x_{m_{i-1}+1, 1}$ and from $y_{m_i, l_i}$ to $v_i^\prime$, of order at most $2\nu^{-1}+1$, for each $i\in[h]$.

\smallskip

\noindent We define the subdigraph obtained from $(i)$ and $(ii)$ above as $\mathcal{H}$. Clearly, $\mathcal{H}$ is an $H$-linked subdigraph in which each corresponding path from $v_i$ to $v^\prime_i$ necessarily contains at least one embedded ladder.
Hence $A_I=(K, \mathcal{L}, \mathcal{H})$ is a Type-I $d$-absorber of $D$.

\medskip

\textbf{(2)} To get a Type-II $d$-absorber $A_{II}$, we do the following. We randomly take the disjoint mapping vertex sets of $V(H)$ to be $f_i(V(H))=\cup_{j=1}^h \{v_{i, j}, v_{i, j}^\prime\}$ for each $i\in[k]$, such that the order of the desired $H$-subdivision $H_i$ with the set of branch-vertices $f_i(V(H))$ is exactly $n_i$. Continuously, we randomly partition $\mathcal{L}$ into $k$ disjoint sets $S_1, \ldots, S_k$ with $S_i=\{L_{m_{i-1}+1}, \ldots, L_{m_i}\}$ satisfying $|V(S_i)|=\sum_{j=m_{i-1}+1}^{m_i}|V(L_j)|<n_i$ for each $i\in[k]$, where $m_0=0$ and $m=\sum_{i=1}^km_i$. Furthermore, let $R_{i, 1}, \ldots, R_{i, r_i}$ be the rung paths of each $L_i$. By deleting their internal vertices of these rung paths (totaling at most $12\nu^{-2}m\leq\nu/2$), the resulting subdigraph $D^\prime$ remains a $(\frac{1}{2}\nu, \frac{32}{31}\tau)$-expander by Proposition \ref{prop1}. Applying Lemma \ref{lamma1} with $\nu, \tau, \gamma$ replaced by $\frac{1}{2}\nu, \frac{32}{31}\tau, \frac{31}{32}\gamma$, respectively, we obtain that

\smallskip

$(iii)$ disjoint paths from $y_{m_{i-1}+l, j}$ to $x_{m_{i-1}+l, j+1}$ for all $i\in[k], l\in[m_i]$ and $j\in[r_{m_{i-1}+l}-1]$, and disjoint paths from $y_{m_{i-1}+l, r_{m_{i-1}+l}}$ to $x_{m_{i-1}+l+1, 1}$ for all $i\in[k]$ and $l\in[m_i-m_{i-1}]$;

\smallskip

$(iv)$ disjoint paths from $v_1$ to $x_{m_{i-1}+1, 1}$ and from $y_{m_i, l_i}$ to $v_1^\prime$, of order at most $2\nu^{-1}+1$, for each $i\in[k]$.

\smallskip

$(v)$ disjoint paths from $v_i$ to $v_i^\prime$ of of order at most $2\nu^{-1}+1$ for all $i\in\{2, \ldots, h\}$.

\smallskip
Following $(iii)$-$(v)$, we obtain $k$ disjoint $H$-linked subdigraphs, each containing at least one embedded ladder. Formally, let $\mathcal{H}$ denote the collection of these properly constructed subdigraphs. This configuration establishes that the triple $A_{II}=(K, \mathcal{L}, \mathcal{H})$ constitutes a well-defined Type-II $d$-absorber of $D$.

\smallskip

Next, we compute the upper bound on the size of the triple $(K, \mathcal{L}, \mathcal{H})$. Clearly, $|V(A_{I})|, |V(A_{II})|\leq12\nu^{-2}m+(s+kh)\cdot(4\nu^{-1}+1)\leq32\nu^{-2}m$. Recall that $m$ is bounded by $m=\lceil 24\gamma^{-2}(d\log(24d\gamma^{-2})+2\log n) \rceil\leq25d\gamma^{-2}\log(24d\gamma^{-2})+48\gamma^{-2}\log n$, where by our parameter assumptions, the inequality holds since $d\gamma^{-2}\log(24d\gamma^{-2})>1$. Combining these estimates yields our final bound:
$$|V(A_{I})|, |V(A_{II})|\leq1600\nu^{-2}\gamma^{-2}(d\log(d\gamma^{-2})+\log n)+3\nu^{-2}d$$
as desired to complete the proof.
\end{proof}
The following unified lemma for Type-I/II absorbers shows how covering pairs and ladder structures absorb external paths into $H$-linked subdigraphs. 
\begin{lemma}\label{coro1}
Let $D$ be a digraph and let $A=(K, \mathcal{L}, \mathcal{H})$ be a Type-I or Type-II $d$-absorber in $D$. Suppose $P_{1}, \ldots, P_{k}\ (k\leq d)$ are disjoint paths in $D$ disjoint from $A$. Then the following statements hold.

\smallskip

$(i)$ If $A$ is a Type-I $d$-absorber \emph{(}where $\mathcal{H}=H^\prime$\emph{)}, then there exists an $H$-linked subdigraph $H^{\prime\prime}$ with $V(H^{\prime\prime})=V(A)\cup\bigcup_{i=1}^k V(P_i)$, retaining the branch-vertices of $H^\prime$.

\smallskip

$(ii)$ If $A$ is a Type-II $d$-absorber \emph{(}where $\mathcal{H}=\{H_1, \ldots, H_k\}$\emph{)}, then there exist $k$ disjoint $H$-linked subdigraphs $H^{\prime}_1, \ldots, H^{\prime}_k$ with $V(H_i^\prime)=V(H_i)\cup V(P_{i})$ for each $i\in[k]$, and $H_i^\prime$ retains the same branch-vertices of $H_i$.
\end{lemma}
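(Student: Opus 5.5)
The plan is to absorb the paths one at a time, using the $d$-covering property of $K$ to give each path its own pair, and then invoke Lemma \ref{lemma2} once per path. Each $P_i$ will be rerouted through a dedicated ladder.

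\textbf{Step 1: assigning pairs.} Write $P_i=x_i\cdots y_i$. Since $K$ $d$-covers $V(D)$, each ordered pair $(x_i,y_i)$ is covered by at least $d$ distinct elements of $K$. Because $k\le d$, a greedy choice gives distinct pairs $(u_1,v_1),\dots,(u_k,v_k)\in K$ with $(u_i,v_i)$ covering $(x_i,y_i)$; that is, $u_ix_i, y_iv_i\in A(D)$. Take the elements in order and, for $P_i$, pick any covering pair not already used. At most $i-1<d$ pairs have been used, so one is always available.

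(If some $P_i$ is a single vertex, so that $x_i=y_i$, I would read the covering condition for the pair $(x_i,x_i)$ in the obvious way. Alternatively one can assume, as is implicit in the definition of $[V(D)]^2$, that every path has at least two vertices.)

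Now set $P_i':=u_ix_iP_iy_iv_i$. This is a path from $u_i$ to $v_i$ whose interior $V(P_i)$ is disjoint from $A$, and hence from $\mathcal H$. Moreover the $P_i'$ are pairwise internally disjoint, because the $P_i$ are disjoint.

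\textbf{Step 2: absorbing.} By condition (2) there is a ladder $L_i\in\mathcal L$ from $u_i$ to $v_i$. The $L_i$ are distinct and pairwise disjoint because the pairs are distinct elements of the disjoint collection $K$.

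For (i), each $L_i$ is embedded in the single $H$-linked subdigraph $H'=\mathcal H$. Applying Lemma \ref{lemma2} to $L_1$ and $P_1'$ gives an $H$-subdivision $H^{(1)}$ with the same branch vertices and vertex set $V(H')\cup V(P_1)$. We then iterate. The proof of Lemma \ref{lemma2} shows that every path of $H'$ disjoint from $L_1$ is preserved. Hence $L_2,\dots,L_k$, being disjoint from $L_1$ and from $P_1$, stay embedded in $H^{(1)}$ in the sense required, with their even rungs still subpaths of single subdivided paths. Also $P_2',\dots,P_k'$ remain internally disjoint from $H^{(1)}$. After $k$ steps we obtain $H''$ with $V(H'')=V(A)\cup\bigcup_i V(P_i)$ and unchanged branch vertices.

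For (ii), we argue identically inside the $H$-subdivision $H_j\in\mathcal H$ that contains $L_i$. To get exactly the matching $V(H_i')=V(H_i)\cup V(P_i)$, the pairs must be chosen with the ladder for $P_i$ lying in $H_i$. This requires that each $H_i$ contain, for every $(x,y)$, a covering pair with ladder embedded in it. I would note that the statement as written needs this (or a relabelling), and otherwise prove the version in which each $P_i$ is absorbed into some member of $\mathcal H$.

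\textbf{Main obstacle.} The step I expect to be the main obstacle is justifying that embeddedness of the remaining ladders survives each application of Lemma \ref{lemma2}. I would handle it by checking that the replacements only touch $V(L_i)\cup V(P_i)$, using the disjointness of ladders and the preservation clause in the proof of Lemma \ref{lemma2}.
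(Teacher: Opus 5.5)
This is essentially the paper's proof: you pick distinct covering pairs $(u_i,v_i)\in K$ from the $d$-covering property, extend each path to $u_ix_iP_iy_iv_i$, and reroute once per path through $L_{u_i,v_i}$ via Lemma \ref{lemma2} (for (i) the paper invokes Corollary \ref{coroy1} on the subdivided path carrying the ladder, which amounts to the same thing), and your check that the untouched ladders stay embedded after each rerouting fills in a step the paper leaves implicit. Your reservation about (ii) is well founded: the paper applies Lemma \ref{lemma2} to $H_i$ and $L_{u_i,v_i}$ without justifying that this ladder lies in $H_i$, nothing in the Type-II definition forces it, and (ii) can genuinely fail (for instance if no arcs join $V(H_1)$ and $V(P_1)$), so your version, in which each $P_i$ is absorbed into some member of $\mathcal H$, is what the argument actually yields.
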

\begin{proof}
For each $i\in[k]$, let $P_i=x_i\cdots y_i$. Since $A$ is a Type-I $d$-absorber, for each $i\in[k]$, there exist distinct vertex pairs $(u_{i},v_{i})\in K$ and corresponding ladders $L_{u_{i}, v_{i}}\in\mathcal{L}$ such that $(u_{i},v_{i})$ covers $(x_{i},y_{i})$. For each $i$, we construct $Q_i=u_{i}x_{i}P_{i}y_{i}v_{i}$ of $D$. Clearly, $Q_{1},\ldots, Q_{k}$ are disjoint by construction.

\smallskip

First, we prove (i). Starting with $H^\prime$, we iteratively modify the structure by applying Corollary \ref{coroy1}. Specifically, for each $i\in[k]$, applying Corollary \ref{coroy1} by replacing $P^\prime$ and $L_{v_0, v_{0}^{*}}$ with $P_i$ and $L_{u_{i}, v_{i}}$, respectively, we get a path $P_i^\prime$ that remains a subdivided path of an $H$-subdivision with the same branch-vertices as $H^\prime$. Hence, (i) is proved.

\smallskip

Secondly, we prove (ii). For each $i\in[k]$, applying Lemma \ref{lemma2} by replacing $H$, $P$ and $L_{v_0, v_0^{*}}$ with $H_i$, $P_i$ and $L_{u_{i}, v_{i}}$, respectively, we obtain an $H$-linked subdigraph $H^\prime_i$ for each $i\in[k]$ such that $V(H^\prime_i)=V(H_i)\cup V(P_i)$, and $H_i^\prime$ retains the same branch-vertices as $H_i$. So this completes the proof of (ii).
\end{proof}
The following result from \cite{K3} serves as our covering lemma, providing a key tool for our analysis by showing that robust expanders admit a Hamilton cycle under the natural semi-degree condition.
\begin{lemma}\label{covering-lemma}\cite{K3}
Let $n\in\mathbb{N}$, and $\nu, \tau, \gamma\in(0, 1)$ satisfying $4\sqrt[13]{\log^2n/n}<\nu\leq\tau\leq\gamma/16<1/16$. Let $D$ be an $n$-vertex robust $(\nu, \tau)$-outexpander with $\delta^0(D)\geq\gamma n$. Then, for any $\nu n/2\leq l\leq n$ and any vertex $v$ of $D$, $D$ contains a cycle of length $l$ through $v$.
\end{lemma}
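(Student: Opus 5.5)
This statement is quoted from \cite{K3}, so the plan is to reconstruct the Lo--Patel argument: absorption, with an absorber that is itself a short cycle through $v$. Fix $d$ and $r$ with $\nu^{-1}\ll r\ll d$. First, by Proposition \ref{prop2}, $D$ is a robust $(\nu^2,2\tau)$-inexpander, hence a robust $(\nu^2,2\tau)$-expander. This lets us use Lemmas \ref{lamma1} and \ref{lemma4}; we pass to $\nu^2$ in place of $\nu$, and the lower bound $\nu>4\sqrt[13]{\log^2 n/n}$ exists precisely to keep $n$ large relative to all the polynomial factors in $\nu^{-1}$ that result. Using Lemmas \ref{lemma3} and \ref{lemma4} as in the proof of Lemma \ref{thm3}, we build a $d$-covering family $K$ and disjoint ladders, one per pair in $K$. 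We then join all rung paths in order, and also $v$, into a single cycle $C_A$ using the short connecting paths of Lemma \ref{lamma1} applied after deleting rung interiors (Proposition \ref{prop1}). The result is a Type-I style absorber whose underlying $H$ is a cycle through $v$, with $|V(C_A)|=O(\nu^{-O(1)}\log n)=o(\nu n)$.

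Second, the covering step. Put $D_1=D-V(C_A)$; by Proposition \ref{prop1} it is still a robust expander with $\delta^0(D_1)\ge \gamma n/2$. We must cover $V(D_1)$ by at most $d$ disjoint paths. Take a random set $R\subseteq V(D_1)$ of size about $\nu^3 n$ as a reservoir; it inherits expansion and degree into $R$ with high probability. Find a near-perfect path cover of $D_1-R$ with few paths, say by greedily extending paths via expansion, or by taking a $1$-factor, which exists by Hall's theorem from the robust expansion and semidegree. A $1$-factor can have many cycles, so we merge them through $R$ using Lemma \ref{lamma1}. Leftover vertices are absorbed as single-vertex paths.

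Third, the absorption step. Lemma \ref{coro1}(i) (with $k\le d$) inserts each path into $C_A$ through its covering pair, producing a Hamilton cycle through $v$, which gives length $l=n$.

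For general $l\ge \nu n/2$, we first choose a set $W\ni v$ with $|W|=l$ containing $V(C_A)$ such that $D[W]$ is still a robust expander. A random subset of size $l\ge \nu n/2$ works by Chernoff bounds, with the constants degrading only polynomially in $\nu$. We then run the argument inside $D[W]$. The main obstacle I expect is the second step: covering with at most $d$ paths when $d$ must be independent of $n$. The $1$-factor from Hall's theorem may have linearly many cycles, so merging them through a reservoir of only $\nu^3 n$ vertices, while keeping the reservoir's own leftovers coverable, is the delicate bookkeeping. I would handle this iteratively: merge cycles two at a time using short paths through $R$, and refresh $R$ via expansion at each round.
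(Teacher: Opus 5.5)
The paper gives no proof of this statement; it is quoted from Lo--Patel \cite{K3}, so your sketch has to stand on its own. Your absorber is fine: a short cycle $C_A\ni v$ carrying disjoint ladders for a $d$-covering family, with $d=O(\nu^{-2})$ independent of $n$. The final absorption via Lemma \ref{coro1} is also fine.

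\textbf{The covering step has a genuine gap.} This step carries essentially all the difficulty of the lemma, and the plan you outline cannot be completed.
\begin{enumerate}
\item A 1-factor supplied by Hall's theorem gives no control over its cycles. It may have $\Theta(n)$ of them, for instance mostly $2$-cycles.
\item Linking $p$ cycles into boundedly many paths needs about $p$ connecting paths. Each of them, being routed through $R$, uses at least one vertex of $R$. So you would need $|R|\gtrsim p=\Theta(n)$, whereas $|R|\approx\nu^3 n$.
\item ``Refreshing'' $R$ is not possible: every vertex outside $R$ already lies on a cycle of the 1-factor.
\item The part of $R$ not used by the connections, typically $\Theta(\nu^3 n)$ vertices, must itself be covered. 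These vertices cannot be ``absorbed as single-vertex paths''. Each absorbed path consumes its own covering pair, and Lemma \ref{coro1} absorbs at most $k\le d=O(\nu^{-2})$ paths in total. Covering the leftover reservoir is therefore the original problem again.
\end{enumerate}
What is missing is an actual covering lemma: $D-V(C_A)$, which is still a robust outexpander with linear minimum semidegree, can be covered by at most $d$ disjoint paths, with $d$ depending only on $\nu$. This has to be proved from robust in- and out-expansion directly, for example by controlling the cycle structure of a well-chosen 1-factor. It cannot be obtained by repairing an arbitrary Hall 1-factor through a sublinear reservoir.

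\textbf{Your treatment of general $l$ is also flawed, and it is unnecessary.} Robust expansion of $D[W]$ is a statement about every $S\subseteq W$. The same applies to your claim that the random reservoir $R$ ``inherits expansion''. Neither follows from Chernoff bounds plus a union bound, for two reasons. First, the sets $S$ depend on $W$. Second, for $|S|<\tau n$ the expansion of $D$ gives no information at all.

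There is also a quantitative obstruction. For $l$ near $\nu n/2$ and $\nu$ near $4\sqrt[13]{\log^2 n/n}$, one has $\nu<4\sqrt[13]{\log^2 l/l}$. By your own account of where the hypothesis on $\nu$ comes from, the argument therefore cannot be rerun inside $D[W]$, even with no loss in the parameters.

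The right move is truncation. Once you have $C_A\ni v$ and at most $d$ disjoint paths covering $V(D)\setminus V(C_A)$, take initial segments of these paths of total order exactly $l-|V(C_A)|$ and absorb them. This gives a cycle of length exactly $l$ through $v$ for every $l$ with $|V(C_A)|\le l\le n$, in particular for every $l\ge\nu n/2$. It is the same segment-cutting the paper performs in the proofs of Theorems \ref{song1} and \ref{song2}.
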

\section{Proofs of Theorems \ref{szc1} and \ref{szc2}}
Let $H$ be a digraph with $h$ arcs and no isolated vertices. Fix a positive integer $C_0$, and consider two integer sets $\mathcal{N}=\{l_1, \ldots, l_h\}$ and $\mathcal{N}^\prime=\{n_1, \ldots, n_k\}$, where $l_i, n_j\geq C_0$ for all $i\in[h], j\in[k]$. Let $n_0=n_0(C_0)\in\mathbb{N}$ and let $\tau, \eta$ be real numbers with $1/n_0\ll\tau\ll\eta<1$. Suppose that $D$ is a digraph on $n\geq n_0$ vertices and satisfies the conditions of Theorems \ref{szc1} and \ref{szc2}. Through Lemma \ref{lm1}, we establish that $D$ is a robust $(\nu, \tau)$-outexpander. Combining with Theorems \ref{song1} and \ref{song2}, respectively, we prove Theorems \ref{szc1} and \ref{szc2}, respectively. Therefore, our subsequent objective reduces to establishing the proofs of Theorems \ref{song1} and \ref{song2}.

\smallskip

We further define $\xi=\nu^{2}/32$ and let $d=\lceil 2\xi^{-1} \rceil\geq 8$. Proposition \ref{prop2} shows that $D$ is also a robust $(\nu^2, 2\tau)$-inexpander, thereby qualifying as a robust $(\nu^2, 2\tau)$-expander. By Lemma \ref{thm3}, there exists a Type-I/II $d$-absorber $A=(K, \mathcal{L}, \mathcal{H})$ of $D$ with $|V(A)|\leq 1600\nu^{-2}\gamma^{-2}(d\log (d\gamma^{-2})+\log n)+3\nu^{-2}d.$ Let $D^\prime=D-V(A)$. Since $n$ is sufficiently large, we have $|V(A)|<\nu n/2$. Then Proposition \ref{prop1} guarantees that $D^\prime$ retains the following properties:

$\bullet$ $D^\prime$ is a robust $(\frac{1}{2}\nu,\frac{32}{31}\tau)$-outexpander;

$\bullet$ $\delta^{0}(D^\prime)>\frac{31}{32}\gamma n$.

\smallskip

Furthermore, applying Lemma \ref{covering-lemma} with modified parameters ($\nu^\prime=\frac{1}{2}\nu, \tau^\prime=\frac{32}{31}\tau, \gamma^\prime=\frac{31}{32}\gamma$), we conclude that $D^\prime$ contains a Hamilton cycle $C$ of $D^\prime$.

\bigskip

\noindent\textbf{Proof of Theorem \ref{song1}.} From $C$, we can obtain $h$ disjoint paths $\{Q_i\}_{i=1}^h$ satisfying
$$|V(Q_i)|=l_i-|V(P_i)|+1\quad \forall i\in[h]$$
where $P_i$ denotes the path from $v_i$ to $v_i^\prime$ in the $H$-linked subdigraph $H^\prime$ in $A$. Then applying Lemma \ref{coro1}-(i) to these
paths $\cup_{i=1}^h Q_i$ and the Type-I $d$-absorber $A$, we construct an $H$-linked subdigraph $H^{\prime\prime}$ whose subdivision paths precisely realize the prescribed lengths $l_{1}, \ldots, l_{h}$, respectively. This completes the proof of Theorem \ref{song1}.
$\hfill\square$

\bigskip

\noindent \textbf{Proof of Theorem \ref{song2}.} Partition $C$ into $k$ disjoint segments $\{P_j\}_{j=1}^k$ with  $$|V(P_i)|=n_i-|V(H_i)|\quad \forall j\in[k]$$
where each $H_j$ is an $H$-linked subdigraph in $\mathcal{H}\subseteq A$ satisfying $|V(H_i)|<n_i$. Lemma \ref{coro1}-(ii) generates $k$ disjoint $H$-linked subdigraphs $\{H_j^\prime\}_{j=1}^k$ in $D$ such that
$$V(H_i^\prime)=V(H_i)\cup V(P_{i})\ \mbox{for each}\ i\in[k].$$ Consequently, the cardinality constraint $|V(H_i^\prime)|=n_i$ for each $i\in[k]$ follows directly from parameter alignment, thereby establishing Theorem \ref{song2}.
$\hfill\square$
\section{Concluding remarks}
As an important class of digraphs, \emph{oriented graphs} (i.e., digraphs without $2$-cycles) have attracted significant research attention in Hamiltonicity studies (see \cite{Guang,W2, W1,Zhou}). In particular, a foundational result by Kelly, K\"{u}hn and Osthus \cite{W1} proved that for every $\varepsilon>0$ there exists $n_0=n_0(\varepsilon)$ such that any oriented graph $D$ on $n\geq n_0$ vertices with minimum semi-degree $\delta^0(D)\geq(3/8+\varepsilon)n$ contains a Hamilton cycle. Later, Lo and Patel \cite{K3} improved the threshold dependency on $n$, while Keevash, K\"{u}hn and Osthus \cite{W2} determined the tight semi-degree threshold. In this paper, we can obtain the following generalization.
\begin{corollary}\label{qwa}
Let $H$ be an oriented graph with $h$ arcs and no isolated vertices. There exists a positive integer $C_0$ such that for any set of integers $\{n_1, \ldots, n_k\}$ with $n_i\geq C_0$ for all $i$, and for any real number $0<\varepsilon<1$, the following statement holds. There exist a positive integer $n_0=n_0(C_0, \varepsilon)$ such that if $D$ is an oriented graph on $n\geq n_0$ vertices and the minimum semi-degree $\delta^0(D)\geq(3/8+\varepsilon)n$, then $D$ admits a perfect $H$-subdivision tiling, where the subdivisions have orders $n_1, n_2, \ldots, n_k$, respectively.
\end{corollary}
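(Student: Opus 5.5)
Corollary \ref{qwa} follows by reducing to Theorem \ref{song2}. The only thing to check is that an oriented graph $D$ with $\delta^0(D)\geq(3/8+\varepsilon)n$ meets the hypotheses there: $\delta^0(D)\geq\gamma n$ for some fixed $\gamma$, and $D$ is a robust $(\nu,\tau)$-outexpander for constants $1/n_0\ll\nu\leq\tau\ll\gamma$. The semi-degree part is immediate with $\gamma:=3/8$ (or any $\gamma\leq 3/8+\varepsilon$, shrunk as needed so that $\tau\ll\gamma$ is possible). Once robust outexpansion is established, Theorem \ref{song2} with $H$, the integers $n_1,\ldots,n_k\geq C_0$ and these parameters yields the perfect $H$-subdivision tiling of the prescribed orders. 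The hypothesis that $H$ is oriented plays no role beyond being a digraph without isolated vertices. The constant $C_0$ is the one from Theorem \ref{song2}, and $n_0$ depends on $C_0$ and $\varepsilon$ through the choice of $\nu,\tau$.

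The main step is robust outexpansion. I would invoke the known result of Kelly, K\"{u}hn and Osthus \cite{W1} (also used in \cite{K3}): for $1/n\ll\nu\ll\tau\ll\varepsilon$, every oriented graph on $n$ vertices with $\delta^0\geq(3/8+\varepsilon)n$ is a robust $(\nu,\tau)$-outexpander. If a self-contained argument were required, I would proceed as follows. Suppose $S$ with $\tau n<|S|<(1-\tau)n$ has $|RN^+_\nu(S)|<|S|+\nu n$. Count arcs from $S$ into $RN^+:=RN^+_\nu(S)$ and compare with $e(S)\leq\binom{|S|}{2}$, using that $D$ is oriented and that few arcs go from $S$ to the complement of $RN^+$ (at most $\nu n^2$). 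Then split into cases according to the sizes of $S\cap RN^+$, $S\setminus RN^+$ and $RN^+\setminus S$. In each case the orientedness bounds the arcs inside a set $X$ by $|X|^2/2$, and this contradicts $\delta^+\geq(3/8+\varepsilon)n$.

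I expect this case analysis to be the main obstacle: the constant $3/8$ is exactly where the inequalities become tight. That is why I would cite \cite{W1} rather than reprove it. Finally, I would check that the parameter hierarchy required by Theorem \ref{song2}, namely $\nu\leq\tau\ll\gamma$, is compatible with the hierarchy $\nu\ll\tau\ll\varepsilon$ in the cited expansion result. Choosing $\tau\ll\varepsilon\leq$ anything and $\gamma=3/8$ works.
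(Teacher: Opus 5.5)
Your proposal is correct and follows the same route as the paper: you check $\delta^0(D)\ge\gamma n$ and robust outexpansion, then apply Theorem~\ref{song2}. The only difference is cosmetic: the paper quotes the expansion result (Lemma~\ref{final}) under the hypothesis $\delta^+(D)+\delta^-(D)+\delta(D)\ge 3n/2+\varepsilon n$ and observes that $\delta^0(D)\ge(3/8+\varepsilon)n$ implies it (since $\delta(D)\ge 2\delta^0(D)$), whereas you cite the semi-degree form of the Kelly--K\"{u}hn--Osthus result directly.
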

\begin{proof}
To prove Corollary \ref{qwa}, We first invoke a key structural lemma from \cite{W3}.
\begin{lemma}\label{final}
Let $n\in\mathbb{N}$ and $\nu, \tau, \varepsilon\in(0, 1)$ satisfy $\nu\leq\frac{1}{8}\tau^2$ and $\tau\leq\frac{1}{2}\varepsilon$. If $D$ is an $n$-vertex oriented graph with $\delta^+(D)+\delta^-(D)+\delta(D)\geq3n/2+\varepsilon n$, then $D$ is a robust $(\nu, \tau)$-outexpander.
\end{lemma}
Given $\delta^0(D)\geq(3/8+\varepsilon)n$, standard semi-degree inequalities imply $\delta^+(D)+\delta^-(D)+\delta(D)\geq3n/2+4\varepsilon n$. So, by Lemma \ref{final}, $D$ is a robust $(\nu, \tau)$-outexpander. Finally, we apply Theorem \ref{song2} to obtain a perfect $H$-subdivision tiling, with the desired orders of subdivisions. This completes the proof of Corollary \ref{final}.
\end{proof}
A fundamental question arising naturally from our work is whether every strongly connected $n$-vertex digraph $D$ satisfying the Nash-Williams condition is $H$-linked where all subdivided paths have the same lengths. If the above problem can be proved right, it would imply the long-standing Nash-Williams's conjecture (Conjecture \ref{Na1}).

\end{document}